\DeclareMathOperator{\diver}{div}
\DeclareMathOperator{\loc}{loc}
\newcommand{\dx}{\,\mathrm{d}x}
\def\Xint#1{\mathchoice
{\XXint\displaystyle\textstyle{#1}}%
{\XXint\textstyle\scriptstyle{#1}}%
{\XXint\scriptstyle\scriptscriptstyle{#1}}%
{\XXint\scriptscriptstyle\scriptscriptstyle{#1}}%
\!\int}
\def\XXint#1#2#3{{\setbox0=\hbox{$#1{#2#3}{\int}$ }
\vcenter{\hbox{$#2#3$ }}\kern-.6\wd0}}
\def\dashint{\Xint-}
\newcommand{\pat}{\partial_t}
\newcommand{\vu}{{\bfu}}
\newcommand{\vn}{{\bfn}}
\newcommand{\vv}{{\bfv}}
\newcommand{\ve}{{\bfe}}
\newcommand{\vg}{{\bfg}}
\newcommand{\vl}{{\bf l}}
\newcommand{\Pe}{\mathcal P}
\newtheorem{theorem}{Theorem}[section]
\newtheorem{lemma}[theorem]{Lemma}
\newtheorem{definition}[theorem]{Definition}
\newtheorem{remark}[theorem]{Remark}
\numberwithin{equation}{section}
\newcommand{\Dt}{D_t}
\newcommand{\dsx}{{\rm d}S_x}
\newcommand{\vr}{\varrho}
\newcommand{\vw}{\omega}
\newcommand{\vrh}{\vr_h}
\newcommand{\vrhkup}{\vrh^{k, \rm up}}
\newcommand{\vuh}{\vu_h}
\newcommand{\vuB}{\vu_{B}}
\newcommand{\bfvh}{\bfv_h}
\newcommand{\vgh}{\vg_h}
\newcommand{\vwh}{\vw_h}
\newcommand{\calah}{\calc_h}
\newcommand{\Qh}{Q_h}
\newcommand{\Vh}{\mathbf{V}_h}
\newcommand{\Vhz}{\mathbf{V}_{0,h}}
\newcommand{\intO}[1]{\int_{\mathscr B+\calc} #1 \dx}
\newcommand{\intC}[1]{\int_{\calc} #1 \dx}
\newcommand{\intB}[1]{\int_{\mathscr B} #1 \dx}
\newcommand{\TS}{\Delta t}
\newcommand{\jump}[1]{\left[\left[ #1 \right]\right]}
\newcommand{\edges}{\mathcal{F}_h}
\title{On the Motion of a Pendulum with a Cavity\\ Filled with a Compressible Fluid}
\date{}
\author{G. P. Galdi\footnote{University of Pittsburgh, email: galdi@pitt.edu} \ and V. M\' acha\footnote{Institute of Mathematics of the Academy of Sciences of the Czech Republic,
email: macha@math.cas.cz} \ and \v S. Ne\v casov\'a\footnote{Institute of Mathematics of the Academy of Sciences of the Czech Republic, email: matus@math.cas.cz} \ and B. She\footnote{Institute of Mathematics of the Academy of Sciences of the Czech Republic and Department of Mathematical Analysis of the Charles University in Prague, email:  she@math.cas.cz}}
\begin{document} \pagenumbering{arabic}
\maketitle

\begin{abstract} We study the motion of the coupled system, $\mathscr S$, constituted by a physical pendulum, $\mathscr B$, with an interior cavity entirely filled with a viscous, compressible fluid, $\mathscr F$. The presence of the fluid may strongly affect on the motion of $\mathscr B$. In fact, we prove that, under appropriate assumptions, the fluid acts as a damper, namely,  $\mathscr S$ must eventually reach a rest-state. Such a state is  characterized by a suitable 
time-independent density distribution of $\mathscr F$ and a corresponding equilibrium position of the center of mass of $\mathscr S$.  These results are proved in the very general class of weak solutions and do not require any restriction on the initial data, other than having a finite energy.  We complement our findings with some numerical tests. The latter show, among other things,  the interesting property that ``large" compressibility  favors the damping effect, since it drastically reduces the time that $\mathscr S$ takes to go to rest.    
\end{abstract}
\section{Introduction}
The general problem of the motion of a rigid body with an interior, hollow cavity entirely filled with a fluid has all along attracted the attention of engineers and applied mathematicians. The list of major contributions only would be too long to include here, and for this we refer the reader to the monographs \cite{Ce0,Ce} and the references therein.\par One of the  remarkable phenomena that motivated this study traces back to the famous experiments  of {\sc Lord Kelvin} \cite{Tho}.  His tests unequivocally showed  that the presence of the fluid in the cavity  substantially influences the motion of the body by producing a significant  stabilizing effect. Modern primary applications of this distinctive property are, for example,  liquid sloshing dampers for vibration control of tall buildings \cite{EUS} and  oscillations suppressors in spacecraft and artificial satellites \cite{AlSp}.
\par
In spite of its relevance, a rigorous and systematic mathematical analysis of the motion of a body with a fluid-filled cavity has started only a few years ago \cite{DGMZ,GaIM,GaMa,GaMa1,GMZ,GMM,GMM1,Ma1,Ma2,Ma3}. These works have, on the one hand,  produced a full explanation of experimental observations and, on the other hand,  hinted at other, new interesting features that might be supported by numerical or lab tests. In particular, { a remarkable} result proved in \cite{GaMa,GMM1} shows that, under certain conditions, the presence of fluid can even bring the coupled system body-fluid  to  full rest.
\par
At this point, it must be emphasized that in all the papers indicated above, the fluid is supposed to be viscous and incompressible. Thus, more recently, in \cite{GaMaNe,GaMaNe2} we began to investigate the case where the fluid is still viscous but compressible. This study has a two-fold motivation. In the first place, to answer the natural question of the influence that compressibility may have on the characteristics of the terminal state. Secondly, the mathematical challenge constituted by the fact that, being the density no longer a constant, a much richer set of terminal states may occur and, therefore, the problem of their attainability can become of primary importance. In \cite{GaMaNe,GaMaNe2}, we limited ourselves to the problem where the coupled system body-fluid, $\mathscr S$, moves in absence of external forces (inertial motions). In particular, we proved, that for ``small" Mach numbers and for initial data of restricted magnitude, the system will reach a  terminal state where the body rotates with constant angular velocity and  the fluid is at rest with respect to the body. Notice that this result is in sharp contrast with the analogous one in absence of fluid, where the generic motion is a complicated {\em motion a la Poinsot}. This shows, in particular, the stabilizing effect of the fluid mentioned earlier on. 
\par
In the current article we begin to analyze the situation when the coupled system $\mathscr S$ is subject to external forces. To this end, we have chosen the classical example where the body is a physical pendulum whose interior is filled up with a viscous barotropic fluid with a classical constitutive law; see \eqref{eq:press.rel}. Our main findings will be described next. In the first place, we formulate the problem in the wide class of weak solutions, namely, suitably renormalized, distributional solutions satisfying the ``energy inequality" and corresponding to initial data that are only  requested to have a finite energy; see Definition \ref{def:weak}. Our objective is to investigate the behavior of these solutions as time goes to infinity and determine all possible terminal states. It comes then natural to consider the class of steady-state solutions, $\mathscr C$, as significant candidates. We thus show that, in such states, $\mathscr S$ must be at rest with a corresponding (time-independent) distribution of fluid density compatible with the vanishing of the axial component (that is, along the axis of rotation) of the total angular momentum. These states represent all  allowed equilibrium configurations for $\mathscr S$ and are characterized by having their center of mass, $C$, belonging to the vertical plane containing the axis of rotation; see Theorem \ref{th:char}. However, unlike the incompressible case, there could be more than two configurations of $\mathscr S$ that could furnish the same location of $C$, due to the fact that the density of the fluid is not constant, thus leading to the circumstance of multiple solutions; see Subsections \ref{RelCon} and \ref{RAU}. This fact makes the problem of attainability of steady-state solutions more complicated, also due to the lack of uniqueness of weak solutions. In any case, we are able to prove that, provided the cavity is convex, $\mathscr C$ is not empty, since it contains the non-empty class of minimizers of the total energy; see Theorem \ref{th:exi}. We then address the question of the asymptotic in time behavior in the class of weak solutions. While their existence can be obtained by a rather standard method (Theorem \ref{th:4.1}), their behavior for large times requires some efforts, especially for the proof of appropriate convergence of the pressure field; see Subsections \ref{Gloe} and \ref{lipre}. As a result, we are able to show that every weak solution tends to a steady state (equilibrium configuration), on condition that there is only one of them with total energy not greater than that of the initial data; see Theorem \ref{asbe}.  We then check that this condition is certainly satisfied if $\mathscr S$ possesses suitable symmetry properties. Precisely, we prove that if the cavity is a sphere with its center on the line passing through the center of mass of the body and its projection on the rotation axis, then whenever $\mathscr S$ is released from rest and in any position other than the straight-down and straight-up ones, it will eventually reach the equilibrium where its center of mass in the straight-down  position.        
\par
The above analytical findings are supported and complemented by several two-dimensional numerical tests. Here the coupled system $\mathscr S$ consists  of two concentric circles $C_1$ and $C_2\subset C_1$, where $C_1\backslash C_2$ is ``the body" and $C_2$ the ``cavity".  The objective is to study the behavior in time of $\mathscr S$, for different values of the physical quantities involved and, in particular, in the limit of very large values of the gas parameter $a$, that is, small Mach number (incompressible limit). The tests show, among other things, a surprising property, namely, that compressibility acts in favor of stability. In other words, all other parameters being fixed, $\mathscr S$ will reach the rest in a shorter time for ``large" $a$, rather than ``small" $a$.  
\par   
The plan of the paper is as follows. After formulating the problem in Section 2, including the definition of weak solution, in Section 2 we prove a characterization (Subsection 3.1) and the existence (Subsection 3.3) of steady-state solutions, along with some comments about their uniqueness (Subsections 3.2 and 3.4). Successively, in Section 4, we study the large-time behavior of weak solutions and prove there our main result on the attainability of steady states. Finally, Section 5 is dedicated to the numerical tests mentioned previously.

\section{Formulation of the Problem} 
Let $\mathscr B$ be a finite rigid body, with an interior hollow  cavity $\calc$ filled with a viscous fluid. In mathematical terms,
$\calc$ is  an
open simply connected domain of $\real^3$ completely surrounded by a domain $\mathscr B$  in such a way that $\partial\mathcal C\subset \mathscr B$, $\mathcal
C \cap \mathscr B = \emptyset$, and $\mathcal C \cup \mathscr B$ is bounded, simply connected and open.
\par
The body $\mathscr B $ is constrained at all times to rotate around a horizontal axis, ${\sf a}$, and we indicate by $O$ the orthogonal projection  of the center of mass $G$ of $\mathscr B$ on ${\sf a}$. Our objective is to study the motion of coupled system body-fluid and, in particular, its behavior for large times. To this end,
 let $\calf=\{O,\bfe_i\}$ be the fixed (inertial) frame  with $\bfe_3$ and $\bfe_1$ directed, respectively, along ${\sf a}$ and  the downward vertical, so that, indicating by $\hat{\bfg}$ the acceleration of gravity, in the frame $\calf$ we have 
\be
\hat{\bfg}=g\,\bfe_1\,,\ \ g=|\hat{\bfg}|.\label{g}
\ee
Further, let $\bfomega=\omega(t)\,\bfe_3$ be the angular velocity of $\mathscr B$, set
\be
\mathbb A(\omega):=\left(\ba{ccc} 0&-\omega&0\\ \omega &0&0\\
0&0&0\ea\right)\label{A}
\ee 
and denote by $\mathbb Q=\mathbb Q(t)$, $t\ge 0$, the family of proper orthogonal transformations solving the following IVP:
$$
\dot{\mathbb Q}=\mathbb A\cdot\mathbb Q\,,\ \ \mathbb Q(0)=\mathbb Q_0\,,
$$
with 
\begin{equation*}
\mathbb Q_0=\left(\ba{ccc} \cos \vartheta_0&-\sin \vartheta_0&0\\ \sin \vartheta_0 &\cos \vartheta_0&0\\
0&0&1\ea\right)\,,\ \ \mbox{some}\ \vartheta_0\in [0,2\pi).
\end{equation*}
Putting
$$
\vartheta(t):=\int_0^t\omega(s){\rm d}s+\vartheta_0\,,\ \ t\ge0\,,
$$
we obtain
\be
\mathbb Q(t)=\left(\ba{ccc} \cos \vartheta(t)&-\sin \vartheta(t)&0\\ \sin \vartheta(t) &\cos \vartheta(t)&0\\
0&0&1\ea\right)\,.\label{Q}
\ee
Let $\mathscr B_0$, $\calc_0$ be {\em arbitrarily fixed} reference configurations of $\mathscr B$ and $\calc$, respectively, and set, for all $t\ge 0$,
\be\ba{ll}\smallskip
\calc(t):=\{\bfy\in \real^3: \ \bfy=\mathbb Q(t)\cdot\bfx, \ \bfx\in\calc_0\}\,,\\ \smallskip
\mathscr B(t):=\{\bfy\in \real^3: \ \bfy=\mathbb Q(t)\cdot\bfx, \ \bfx\in\mathscr B_0\}\,,\\
\mathcal S(t):
=\mathscr B(t)\cup\calc(t)\,.
\ea\label{1}
\ee
Then, the equations of motion of the coupled system body-liquid in the frame $\mathcal F$ (that is, in the $y$-variable) are given by \cite{KNN}
\be\ba{cc}\smallskip\left.\ba{ll}\smallskip
\partial_t(r\bfw)+\Div(r\bfw\otimes\bfw)=\Div\hat{\mathbb S}(\bfw)-\nabla p(r)+r\,g\,
\bfe_1\\
\partial_tr+\Div(r\bfw)=0\ea\right\}\ \ (\bfy,t)\in\cup_{t>0}\,\calc(t)\times\{t\}\\\medskip
\bfw=\omega(t)\bfe_3\times\bfy\,\ \ (\bfy,t)\in\cup_{t>0}\,\partial\calc(t)\times\{t\}\\
\ode{}t\left(J(t)\omega +\bfe_3\cdot\Int{\calc(t)}{}r\, \bfy\times\bfw\,{\rm d}y\right)=\bfe_3\cdot\left[\left(\Int{\mathcal S(t)}{}\hat{r}\,\bfy\,{\rm d}y\right)\times g\bfe_1\right]\equiv g \Int{\mathcal S(t)}{}\hat{r}y_2\,{\rm d}y\,.
\ea\label{2}
\ee
Here, $r=r(\bfy,t)$, $\bfw=\bfw(\bfy,t)$ are density and velocity fields of the fluid, while $\hat{\mathbb S}$ is the viscous part of Cauchy stress tensor. Moreover, denoting by $r_{{}_\mathscr B}(\bfy)$ the density of $\mathscr B$, we set
$$
\hat{r}:=\left\{\ba{ll}\smallskip r(\bfy,t)\ &\ \mbox{if $y\in \calc(t)$}\\
\smallskip r_{{}_\mathscr B}(\bfy)\ &\ \mbox{if $y\in \mathscr B(t)$}
\ea\right.
$$  
and
$$
J(t):=\int_{\mathscr B(t)}r_{{}_\mathscr B}(\bfy)\delta^2(\bfy)\,{\rm d}y
$$
where $\delta(\bfy)=\dist (\bfy,{\sf a})$. 
\par
In order to convert the problem into an equivalent one where the domain of the fluid does not change with time, we define
\begin{equation*}\ba{ll}\smallskip
\varrho(\bfx,t):= r(\mathbb Q(t)\cdot\bfx,t),
\,  \bfu(\bfx,t):=\mathbb Q^\top(t)\cdot\bfw(\mathbb Q(t)\cdot\bfx,t), \  (\bfx,t)\in \calc_0\times (0,\infty),\\
\mathbb S(\bfu):=\mathbb Q^\top(t)\cdot\hat{\mathbb S}(\mathbb Q(t)\cdot\bfu)\cdot\mathbb Q(t), \, \bfg(t):=\mathbb Q^\top(t)\cdot\hat{\bfg},\,  \varrho_{{}_\mathscr B}(\bfx,t):= r_{{}_\mathscr B}(\mathbb Q(t)\cdot\bfx), \ t\in(0,\infty)\,,
\ea
\end{equation*}
so that, recalling that \be\mathbb Q(t)\cdot\bfe_3=\bfe_3\,,\ \ \mbox{ for all $t\ge 0$},\label{VFM}\ee the system (\ref{2}), in terms of the $\bfx$-variable and fields $\varrho$ and $\bfu$, thus becomes \cite{KNN}
\be\ba{cc}\smallskip\left.\ba{ll}\smallskip
\partial_t(\varrho\bfu)+\Div(\varrho\bfv\otimes\bfu)+\varrho\,\omega\bfe_3\times\bfu=\Div{\mathbb S}(\bfu)-\nabla p(\varrho)+\varrho\,\bfg\\
\partial_t\varrho+\Div(\varrho\bfv)=0\ea\right\}\ \ (\bfx,t)\in\calc_0\times(0,\infty)\\\medskip
\bfu=\omega(t)\bfe_3\times\bfx\,\ \ (\bfx,t)\in \partial\calc_0\times(0,\infty)\\
\ode{}t\left(I\,\omega +\bfe_3\cdot\Int{\calc_0}{}\varrho\, \bfx\times\bfu\, \dx \right)=\bfe_3\cdot\left[\left(\Int{\mathcal S_0}{}\hat{\varrho}\,\bfx\, \dx \right)\times \bfg\right]\,,
\ea\label{eq:compressible.NS}
\ee
where 
\be
\bfv:=\bfu-\omega\bfe_3\times\bfx\,, 
\label{eq:u.v.rel}
\ee
and
$$\ba{ll}\medskip
\hat{\varrho}:=\left\{\ba{ll}\smallskip \varrho(\bfx,t)\ &\ \mbox{if $x\in \calc_0$}\\
\smallskip \varrho_{{}_\mathscr B}(\bfx)\ &\ \mbox{if $x\in \mathscr B_0$}
\ea\right.\,,\\
I:=\Int{\mathscr B_0}{}\varrho_{{}_\mathscr B}(\bfx)\delta^2(\bfx)\, \dx \,.
\ea
$$
Moreover,  
$$
\mathbb S(\bfu) = 2\mu \mathbb D(\vu) + \left(\lambda-\frac 23\mu\right) {\mathbb I} \diver \bfu
$$
where $\mathbb D$ denotes the symmetric part of $\nabla\bfu$, $\mathbb I$ the identity matrix, while $\mu>0$ and $\lambda\ge 0$ are (constant) shear and bulk viscosity coefficients.
Also, observing that, by (\ref{A}) and (\ref{Q}),  $\dot{\mathbb Q}^\top\cdot\mathbb Q=\mathbb A(\omega)$. we derive
$$
\dot{\bfg}=\dot{\mathbb Q}^\top\cdot\mathbb Q\cdot\bfg=\mathbb A(\omega)\cdot\bfg\,,
$$
namely,
\begin{equation*}
\dot{\bfg}+\omega\,\bfe_3\times\bfg=\0\,,
 \ \  t\in(0,\infty)\,.
\end{equation*}
For the pressure $p$ we assume the following constitutive law
\begin{equation}\label{eq:press.rel}
p(\varrho) = a\varrho^\gamma\,,
\end{equation}
for some $a>0$ and $\gamma>3/2$.
Further, 
we endow (\ref{eq:compressible.NS}) with the initial conditions
$$
\varrho(0,\bfx) = \varrho_0(\bfx),\qquad \varrho(0,\bfx)\bfu(0,\bfx) = (\varrho\bfu)_0(\bfx)\,
$$
so that,  integrating
\eqref{eq:compressible.NS}$_2$ over $(0,t)\times\mathcal C_0$ for arbitrary $t\in \mathbb R$ we deduce the equation of conservation of mass for the fluid
\be\label{mcl}
\int_{\mathcal C_0} \varrho(t,\bfx) \dx  = \int_{\mathcal C_0} \varrho_0(\bfx)\dx.
\ee
The unknowns of \eqref{eq:compressible.NS} are $\bfu:(0,T)\times \calc_0\to \mathbb R^3$, $\varrho:(0,T)\times (\mathscr B_0\cup \overline \calc_0)\to \mathbb R$ and $\vg:(0,T)\to \mathbb R^3$, while we assume that the density $\varrho_{\mathscr B}$ of $\mathscr B$ is  prescribed.  However, instead of the unknown $\vu$, sometime we may find it more appropriate to use the velocity $\bfv$  defined in \eqref{eq:u.v.rel}.

If we formally multiply \eqref{eq:compressible.NS}$_1$ by $\vu$,  \eqref{eq:compressible.NS}$_4$ by $\omega $ and integrate by parts, we  deduce
the energy inequality:
\begin{equation*}\frac 12 \left(I\,{\rm d}_t|\omega|^2 + \pat \int_{\mathcal C_0} \varrho |\vu|^2\, \dx \right) + \int_{\mathcal C_0} \mathbb S(\bfv):\nabla\bfv\, \dx  + \pat \int_{\mathcal C_0}P(\varrho)\, \dx  \leq \pat \int_{\mathcal S_0}\hat{\varrho}\, \bfx\cdot \vg\, \dx \end{equation*}
which, after integration, leads to
\begin{equation*}\ba{ll}\medskip
\left[I\, \Frac{|\omega(t)|^2}{2} + \frac 12\Int{\mathcal C_0}{} \varrho(t) |\vu(t)|^2\dx + \Int{\mathcal C_0}{}P(\varrho(t))\dx - \Int{\mathcal S_0}{}\hat{\varrho}\, \bfx\cdot \vg\dx\right]_{t=0}^\tau \\
\hspace*{6.5cm} +  \Int0\tau \Int{\mathcal C_0}{} \mathbb S(\bfv):\nabla \bfv\dx{\rm d}t \leq 0,\ea
\end{equation*}
where $$P(\varrho) = \frac a{\gamma-1}\varrho^\gamma.$$

Our primary objective is to investigate the long-time behavior of  the system \eqref{eq:compressible.NS}--\eqref{eq:press.rel} in the class of weak solutions,  which we defined next.

\begin{definition}
A quadruple $(\varrho,\bfv,\omega,\vg)$ is a renormalized weak solution to \eqref{eq:compressible.NS} on time interval $(0,T)$ if~\footnote{Recall \eqref{eq:u.v.rel}\,.} \label{def:3.1}
\begin{itemize}
\item The momentum equation \eqref{eq:compressible.NS}$_1$ is fulfilled in a weak sense, i.e.
\begin{multline}
\int_0^T\int_{\mathcal C_0} \varrho \bfu\cdot \pat \bfphi \dx{\rm d}t + \int_0^T\int_{\mathcal C_0} \varrho \bfv \otimes \bfu: \nabla \bfphi\dx{\rm d}t - \int_0^T\int_{\mathcal C_0} \varrho \omega \ve_3 \times \bfu\cdot \bfphi \dx{\rm d}t\\ + \int_0^T\int_{\mathcal C_0}p(\varrho) \diver \bfphi \dx{\rm d}t - \int_0^T\int_{\mathcal C_0} \mathbb S(\bfu) :\nabla \bfphi \dx{\rm d}t\\ = -\int_0^T\int_{\mathcal C_0} \varrho \,\bfg\cdot \bfphi \dx{\rm d}t - \int_{\mathcal C_0} (\varrho\bfu)_0\cdot\bfphi(0)\dx{\rm d}t\label{eq:momentum.weak}
\end{multline}
for all $\bfphi\in C_c^\infty([0,T)\times \mathcal C_0)$, $\bfphi|_{\partial \mathcal C_0} = 0$.
\item The continuity equation is fulfilled in a renormalized weak sense, i.e.
\begin{multline}\label{eq:continuity.weak}
\int_0^T\int_{\mathcal C_0} b(\varrho) \pat \varphi \dx{\rm d}t  + \int_0^T\int_{\mathcal C_0} b(\varrho) \bfv \cdot\nabla \varphi \dx{\rm d}t\\ + \int_0^T\int_{\mathcal C_0} (b(\varrho) - b'(\varrho)\varrho)\diver \bfv\, \varphi \dx{\rm d}t = -\int_{\mathcal C_0} \varrho_0 \varphi(0)\dx
\end{multline}
for all $\varphi \in C^\infty_c([0,T)\times \overline{\mathcal C_0})$ and any $b\in C^1[0,\infty)$, $|b'(z)z|\leq c \sqrt{|z|}$.
\item The equations \eqref{eq:compressible.NS}$_{3,4}$ are fulfilled.
\item 
The energy inequality
$$
\mathcal E(\varrho(\tau),\vu(\tau),\omega(\tau), \vg(\tau)) \leq  \mathcal E\left(\varrho_0,\frac{(\varrho \bfu)_0}{\varrho_0},\omega_0,\vg_0\right) - \int_0^\tau \int_{\mathcal C_0} \mathbb S(\bfv):\nabla \bfv \dx{\rm d}t$$
is fulfilled for almost all $\tau \in [0,T)$, where
\be
\begin{array}{rl}\medskip\label{EnEr}
\mathcal E(\varrho,\vu,\omega,\vg) :=&\!\!\!
 I\, \displaystyle{\frac{|\omega|^2}{2} + \frac 12\int_{\mathcal C_0} \varrho |\vu|^2\dx + \int_{\mathcal C_0}P(\varrho) - P'(\overline \varrho) (\varrho - \overline\varrho) - P(\overline\varrho)\dx}
 \\&\!\!\! \displaystyle{- \int_{\mathcal S_0}\hat{\varrho} \bfx\cdot \vg\dx\,,}
\end{array}
\ee
and  $\overline\varrho = \frac 1{|\calc_0|} \int_{\mathcal C_0} \varrho \dx$ is constant in time due to \eqref{mcl}.
\end{itemize}\label{def:weak}
\end{definition}

\begin{remark} As shown in \cite{GaMaNe},  a sufficiently smooth weak solution defined as above solves, in fact, \eqref{eq:compressible.NS} pointwise.
\end{remark}

\begin{remark}
Our definition of weak solution allows us to deduce a weak formulation for a larger class of test functions. 
Precisely, take  
\begin{equation}\label{eq:test.f}
\bfphi = \bfphi_0 + \eta \ve_3\times \bfx\ \mbox{ where }\bfphi_0\in C^\infty_c([0,T)\times\mathcal C_0)\ \mbox{and } \eta\in C^\infty_c([0,T)).
\end{equation}
We multiply \eqref{eq:compressible.NS}$_4$ by $\eta$ to get
\begin{equation}\label{eq:bdry.term}
I \dot{\omega} \,\eta - \int_{\calc_0} \eta\,\pat(\varrho \bfu )\times \bfx \cdot\ve_3 \dx = \int_{\mathcal S_0} \eta\,\hat{\varrho} \bfx \dx \times \bfg \cdot\ve_3\, .
\end{equation}
Since, by definition,
\begin{equation*}
I\,  \ve_3 = \int_{\mathscr B} \varrho_\mathscr B \bfx \times (\ve_3 \times \bfx)\dx\,,
\end{equation*}
the term on the left hand side of \eqref{eq:bdry.term} is equal to
\begin{equation*}
\int_{\cals_0} \pat \eta (\hat{\varrho} \bfu) \cdot(\ve_3\times \bfx) \dx\,,
\end{equation*}
where $\bfu$ is extended to $\omega\bfe_3\times\bfx$ on $\mathscr B_0$.
Further, the right hand side of \eqref{eq:bdry.term} can be rewritten as
\begin{equation*}
\int_{\mathcal S_0} \eta\,\hat{\varrho} \vg \cdot (\ve_3\times \bfx)\dx\,,
\end{equation*}
so that, \eqref{eq:bdry.term} can be equivalently formulated as follows
\begin{equation}\label{eq:bdry.term.2}
\pat \int_{\cals_0} \eta(\hat{\varrho} \bfu ) \cdot(\ve_3\times \bfx)\dx - \int_{\cals_0} \pat \eta\,(\hat{\varrho} \bfu)\cdot(\ve_3\times \bfx)\dx = \int_{\cals_0} \eta\, \hat{\varrho}\, \vg \cdot(\ve_3 \times \bfx)\dx
\end{equation}
Next, we observe that
\begin{multline*}
\int_{\cals_0} \eta\, \hat{\varrho} \omega \ve_3  \times \bfu \cdot (\ve_3 \times \bfx)\dx = \int_{\cals_0} \eta\,\hat{\varrho}\,  \omega \ve_3\times(\bfv + \omega \ve_3\times \bfx)\cdot (\ve_3 \times \bfx)\dx 
\\= \int_{\cals_0} \eta\,\hat{\varrho} (\ve_3 \times \bfv)\cdot (\omega \ve_3 \times \bfx) \dx
 = \int_{\calc_0}\eta\,{ \varrho}  (v_2,-v_1,0)\cdot (\omega \ve_3 \times \bfx) \dx.
\end{multline*}
Furthermore,
\begin{multline*}
\int_{\cals_0} \hat{\varrho} (\bfv\otimes \bfu):\nabla(\eta \ve_3\times \bfx)\dx = \int_{\calc_0} \varrho \eta (\bfv \otimes \bfu):\nabla(\ve_3\times \bfx)\, \dx   = \int_{\calc_0} \varrho  \eta v_iu_j \partial_i \left(x_2,-x_1,0\right)_j\,  \dx \\ = \int_{\calc_0} \varrho \eta (-v_1 u_2 + v_2 u_1) \,  \dx  
 = \int_{\calc_0} \varrho \eta (v_2,-v_1,0)\cdot(\omega \ve_3\times \bfx)\, \dx  \,,
\end{multline*}
from which we deduce
\begin{equation}\label{eq:bdry.term.3}
-\int_{\cals_0} \hat{\varrho} (\bfv \otimes \bfu): \nabla(\eta \ve_3\times \bfx)\,  \dx  + \int_{\cals_0} \hat{\varrho} \omega \ve_3\times \bfu \cdot (\eta \ve_3\times \bfx)\,  \dx  = 0
\end{equation}
Also, we observe that since $\mathbb S$ is a symmetric tensor whereas $\nabla (\ve_3 \times \bfx)$ is antisymmetric, we get 
\begin{equation*}
\int_{\calc_0} \mathbb S(\bfv):\nabla(\eta \ve_3 \times \bfx) \,  \dx  = 0.
\end{equation*}
Thus, adding \eqref{eq:momentum.weak} with $\bfphi = \bfphi_0$, \eqref{eq:bdry.term.2} and \eqref{eq:bdry.term.3}, we obtain
\begin{multline}\label{eq:more.test}
\pat\Int{\cals_0}{} \hat{\varrho} \bfu\cdot \bfphi\dx - \Int{\cals_0}{} \hat{\varrho} \bfu\cdot\pat \bfphi \dx - \Int{\cals_0}{} \hat{\varrho} (\bfv\otimes\bfu):\nabla \varphi\dx
\\ 
+ \Int{\cals_0}{} \hat{\varrho} \omega \ve_3\times \bfu\cdot \bfphi\dx + \Int{\cals_0}{} \mathbb S(\bfv):\nabla \bfphi \dx = \Int{\cals_0}{} \hat{\varrho} \vg\cdot \bfphi\dx,
\end{multline}
where $\bfphi$ is a test function of the form \eqref{eq:test.f}.
Note that the energy inequality may be deduced formally from \eqref{eq:more.test} by taking $\bfphi = \bfu$.
\end{remark}

\section{Steady states}
One may expect that, for sufficiently large times, the generic weak solution may approach some steady state (namely, a time-independent solution of \eqref{eq:compressible.NS}) in a suitable topology. This will be investigated in Section \ref{sec:3}. Therefore, the main goal of this section is to find and characterize all possible  steady states, in the class of renormalized weak solutions. Before performing this study, however, we would like to make some simple but important remarks concerning the class of irrotational solutions to \eqref{eq:compressible.NS}, that is, those for which $\omega(t)=0$ for all $t\ge 0$.
\par
From what we presented at the beginning of the previous section, 
in those motions where $\omega(t)\equiv 0$, we have 
\be\mathbb Q(t)=\mathbb Q_0\,,\  \mbox{for all $t\ge 0$}, 
\label{Qt}
\ee
implying that \be\bfy=\mathbb Q_0\cdot\bfx,
\label{c}
\ee and, moreover, 
\be\label{gpg}
\varrho= r(\mathbb Q_0\cdot\bfx),\, \bfu=\mathbb Q_0^\top\cdot\bfw(\mathbb Q_0\cdot\bfx), \, \varrho_{{}_\mathscr B}= r_{{}_\mathscr B}(\mathbb Q_0\cdot\bfx), \, \bfg=\mathbb Q^\top_0\cdot\hat{\bfg}\,.
\ee
We notice that, by \eqref{1} and \eqref{Qt}, in such a case the position of the body (as well as  that of the cavity) is {\em time independent in the original frame $\mathcal F$}. We also notice that the system of equations \eqref{2} (or, equivalently \eqref{eq:compressible.NS})   { might seem overdetermined}. However, this is {\em not} the case, because, in general, we { cannot} expect that  motions with $\omega(t)\equiv 0$ may occur for {\em any} $\mathbb Q_0$ (that is, any orientation of $\mathscr B$). Therefore, $\mathbb Q_0$ (namely, $\vartheta_0$) becomes a {\em further unknown}, which thus makes the problem well-defined. 
\par
\subsection{Characterization}
With these premises, we now turn to the characterization of steady-state solutions.
From \eqref{eq:compressible.NS} we derive that they must satisfy  the following set of equations
\begin{equation}\label{eq:compressible.steady}
\begin{split}
\diver(\varrho\bfv\otimes\bfu) + \varrho\omega\ve_3\times\bfu + \nabla p(\varrho) - \diver \mathbb S(\bfu) & = \varrho\,\bfg\\
\diver(\varrho\bfv) & = 0\\
\bfu& = \omega\ve_3\times \bfx\\
\ve_3\cdot\left(\int_{\cals_0} \hat{\varrho} \bfx\dx\right)\times \vg &= 0\\
\omega \ve_3\times\vg & = 0.
\end{split}
\end{equation}
We work with a { \em renormalized weak solution}, i.e. a quadruple $(\varrho, \vv,\omega,\vg)$ (recall $\bfv = \vu -  \omega\ve_3\times \bfx$) which satisfies \eqref{eq:compressible.steady} and
$$
\diver (b(\varrho)\bfv) + (\varrho b'(\varrho) - b(\varrho))\diver \bfv = 0,\ \forall \; b\in C^1({\mathbb R})
$$
in distributional sense. 
The system \eqref{eq:compressible.steady} is complemented with the conservation of mass \eqref{mcl}:
\begin{equation}\label{eq:compressible.second}
\int_{\calc_0} {\varrho}(\bfx) \dx  = \int_{\calc_0} \varrho_0 (\bfx)\dx:=M\,.
\end{equation}
We also recall that the  gravity has prescribed magnitude, i.e.,
\begin{equation}\label{eq:compressible.grav}
|\vg| = |\vg_0|\equiv g\, .
\end{equation}
Now, since $\bfg\cdot\ve_3=0$, from the last equation in \eqref{eq:compressible.steady} we get $\omega = 0$, and so, arguing exactly  as in \cite[Lemma 1]{GaMaNe}, we show that $\bfv = 0$.
Consequently, \eqref{eq:compressible.steady} reduces to a system of only two relevant equations:
\begin{equation}\label{eq:steady.2}
\begin{split}
\nabla p(\varrho(\bfx)) & = \varrho(\bfx)\vg\ \ \mbox{in} \ \calc_0,\\
\bfe_3\cdot\left(\int_{\cals_0} \hat{\varrho}(\bfx) \bfx\dx\right)\times \vg& =0\,.
\end{split}
\end{equation}
Since $\omega=0$, by what we just proved and what we remarked at the beginning of this section, by \eqref{Qt}--\eqref{gpg} we deduce 
\be
\varrho:=r_s(\mathbb Q_0\cdot\bfx),\ \ \hat{\varrho}:=\hat{r}_s(\mathbb Q_0\cdot\bfx), \ \ \bfu\equiv\bfw\equiv 0, \ \
\bfg=\mathbb Q^\top_0\cdot\hat{\bfg}\,,
\label{s}
\ee 
for some $\mathbb Q_0$ {\em to be found}, where, from 
 \eqref{eq:steady.2}$_1$,  $r_s$ satisfies
$$
\nabla_{\mbox{\footnotesize $\bfx$}} p(r_s(\mathbb Q_0\cdot\bfx))=r_s(\mathbb Q_0\cdot\bfx)\,\bfg,
$$
and where we have emphasized that the derivatives are taken with respect to the $\bfx$-variable. 
Employing \eqref{c},  \eqref{s} and \eqref{g} in the latter, we show (derivatives now taken with respect to the $\bfy$-variable)
$$
\mathbb Q_0^\top\cdot[\nabla_{\mbox{\footnotesize$\bfy$}} p(r_s(\bfy))-r_s(\bfy)\,g\,\bfe_1]=0\,,
$$
which, recalling that $p(r_s)=a\,r_s^\gamma$, is in turn equivalent to 
$$
\ode{r_s^{\gamma-1}}{y_1}=\frac{\gamma-1}{a\,\gamma}\,g\,.
$$
Integrating both sides of this equation, and assuming that the cavity is convex we conclude
\be
r_s=r_s(y_1)=\left[\left(\frac{a\,\gamma}{\gamma-1}\,g\,y_1+c\right)_+
\right]^{\frac1{\gamma-1}}\,,\label{91}
\ee
for some constant $c\in\real$.
\par
We next investigate the class of all possible $\mathbb Q_0$ compatible with steady-state solutions, that is, the equilibrium configurations of the pendulum. From \eqref{eq:steady.2}$_2$ and \eqref{s} we obtain
\be
\bfe_3\cdot\left[\Int{\mathcal S_0}{}\hat{r}(\mathbb Q_0\cdot\,\bfx) \, \bfx\times \bfg\, \dx \right]=0\,.\label{A1}
\ee 
In this integral we now perform the change of variable \eqref{c}. Thus, taking into account \eqref{s}$_4$, \eqref{VFM}, \eqref{91} and that
$$
\bfx\times\bfg=(\mathbb Q_0^\top\cdot\bfy)\times (\mathbb Q_0^\top\cdot\hat{\bfg})=\mathbb Q_0^\top\cdot(\bfy\times\hat{\bfg})\,,
$$
we show that \eqref{A1} is equivalent to
\be
\bfe_3\cdot\left[\Int{\mathcal S_{eq}}{}\hat{r}(\bfy) \, \bfy\times \hat{\bfg}\,{\rm d}y\right]=0\,,\label{15}
\ee
where
\be\ba{ll}\smallskip
\mathcal S_{eq}:=\mathscr B_{eq}\cup 
\mathcal C_{eq}\,,\\ \smallskip
\mathscr B_{eq}:=\{\bfy\in \real^3: \ \bfy=\mathbb Q_0\cdot\bfx, \ \bfx\in\mathscr B_0\}\,,\\
\mathcal C_{eq}:=\{\bfy\in \real^3: \ \bfy=\mathbb Q_0\cdot\bfx, \ \bfx\in\mathcal C_0\}\,.
\ea\label{equ}
\ee
The relation \eqref{15} expresses the vanishing of the axial component of the angular moment of the coupled system $\mathscr S$ at equilibrium in the fixed frame $\mathcal F$.
 By keeping in mind    \eqref{91} and \eqref{g},  we show that \eqref{15}, in turn, is equivalent to the following one
\begin{equation}
\Int{\mathcal C_{eq}}{}r_s(y_1)y_2\,{\rm d}y+\Int{\mathscr B_{eq}}{}r_{{}_\mathscr B}(\bfy)y_2\,{\rm d}y=0\,,\label{16}
\end{equation}
which tells us that the center of mass $C$ of the coupled system must belong to the vertical plane containing the rotation axis ${\sf a}$.
Now, in any such equilibrium configurations $\mathcal S_{eq}$, the position $\bfy_C$ $(\equiv \overrightarrow{OC})$  in the fixed (inertial) frame is given by
$$
\bfy_C=\frac1{\calm}\left(\int_{\mathcal C_{eq}}r_s(y_1)\bfy\,{\rm d}y+\int_{\mathscr B_{eq}}r_{{}_\mathscr B}(\bfy)\,\bfy\,{\rm d}y\right)\,,
$$
with $\mathcal M=M+m$, and $m$ mass of $\mathscr B$. Since we chose $O$  as the orthogonal projection of the center of mass $G$ of $\mathscr B$ on ${\sf a}\equiv\bfe_3$, we have
$$
\int_{\mathscr B_{eq}}r_{{}_\mathscr B}(\bfy)\,
y_3\,{\rm d}y=0\,.
$$
Therefore, collecting the above results we conclude with the following characterization of steady states.
 
\begin{theorem} \label{th:char}
The quadruple $(\varrho_s,\bfu_s:=\bfv_s+\omega_s\bfe_3\times\bfx,\omega_s,\bfg_s)$ is a renormalized weak solution to \eqref{eq:compressible.steady} if and only if the following conditions {\rm (i)--(iii)} are met:
\begin{itemize}
\item[{\rm (i)}] $\omega_s=0$, $\bfu_s\equiv\0$. 
\end{itemize}
Setting $\bfy=\mathbb Q_0\cdot\bfx$: 
\begin{itemize}
\item[{\rm (ii)}]   $\varrho_s=r_s(y_1)$, where $r_s(y_1)$ is given in \eqref{91}, and $\hat{\bfg}_s=g\bfe_1$\,; 
\item[{\rm (iii)}] The rotation matrix $\mathbb Q_0$ is determined by the request that the center of mass $C$ of $\cals$ is located in the vertical plane, ${\sf V}$, containing the rotation axis ${\sf a}$, and precisely  at the point $\bfy_C$ that, in the fixed (inertial) frame, is given by  
\be
\bfy_C= \frac1{\calm}\left[\left(\int_{\mathcal C_{eq}}r_s(y_1)y_1\,{\rm d}y+\int_{\mathscr B_{eq}}r_{{}_\mathscr B}(\bfy)\,y_1\,{\rm d}y\right)\bfe_1+ \left(\int_{\mathcal C_{eq}}r_s(y_1)y_3\,{\rm d}y\right)\bfe_3\right]\,,\label{Nap}
\ee
with $\mathscr B_{eq}$ and $\calc_{eq}$ given in \eqref{equ}.\end{itemize}
Finally,
the constant $c$ in \eqref{91} is obtained by the condition
\be\label{eq:mass}
\int_{\mathcal C_{eq}}r_s(y_1)\,{\rm d}y=M\,.
\ee   
\end{theorem}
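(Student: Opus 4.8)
The statement is an equivalence, and the heavier of the two implications---necessity---has essentially been assembled in the computations preceding the theorem; the plan is therefore to organize those computations into a clean deduction and then to dispatch sufficiency by direct verification. For necessity I would start from the last equation of \eqref{eq:compressible.steady}, namely $\omega\,\bfe_3\times\bfg=\0$: since $\bfg=\mathbb Q_0^\top\cdot\hat{\bfg}$ together with $\mathbb Q_0\cdot\bfe_3=\bfe_3$ forces $\bfg\cdot\bfe_3=0$, while $|\bfg|=g>0$, the vector $\bfg$ cannot be parallel to $\bfe_3$, and hence $\omega=0$. This immediately yields the boundary value $\bfu=\omega\,\bfe_3\times\bfx=\0$ on $\partial\calc_0$, and then the argument of \cite[Lemma 1]{GaMaNe}---testing the momentum balance against $\bfv$ and exploiting the dissipation $\int_{\calc_0}\mathbb S(\bfv):\nabla\bfv\,\dx$ in conjunction with the renormalized continuity equation---gives $\bfv\equiv\0$ in $\calc_0$, so that $\bfu_s\equiv\0$. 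This establishes (i).

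With $\bfu=\bfv=\0$ and $\omega=0$ the momentum balance collapses, in the weak sense, to $\nabla p(\varrho)=\varrho\,\bfg$, i.e. \eqref{eq:steady.2}$_1$. Passing to the fixed frame through $\bfy=\mathbb Q_0\cdot\bfx$ and using $\hat{\bfg}=g\,\bfe_1$, this becomes $\frac{{\rm d}}{{\rm d}y_1}\bigl(r_s^{\gamma-1}\bigr)=\tfrac{\gamma-1}{a\gamma}\,g$ with the two transverse derivatives vanishing, which integrates to the profile \eqref{91}; this is precisely where convexity of the cavity enters, as it guarantees that $\{r_s>0\}\cap\calc_0$ is connected and therefore that a single integration constant $c$ describes the whole density, rather than one constant per connected component. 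This yields (ii), the identification $\hat{\bfg}_s=g\,\bfe_1$ being built into the change of frame. The surviving equation \eqref{eq:steady.2}$_2$ is exactly the vanishing \eqref{15} of the axial angular moment, which, after inserting \eqref{91}, reduces to the center-of-mass relation \eqref{16}; combined with the expression for $\bfy_C$ and the choice of $O$ as the orthogonal projection of $G$ onto the axis, this furnishes (iii) and \eqref{Nap}. The constant $c$ is finally pinned down by imposing conservation of mass \eqref{eq:mass}.

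For sufficiency I would simply verify that any quadruple meeting (i)--(iii) solves each line of \eqref{eq:compressible.steady} in the renormalized weak sense. With $\bfv=\0$ the renormalized continuity identity holds trivially; in the momentum equation the convective, Coriolis and viscous terms all vanish, leaving only $\nabla p(\varrho)=\varrho\,\bfg$, which holds for \eqref{91} by construction (interpreted weakly across the free boundary $\{r_s=0\}$); the fifth equation holds because $\omega=0$, and the fourth is exactly \eqref{16}, which is guaranteed by (iii).

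The main obstacle I anticipate lies not in the algebra but in the two places where the weak and renormalized framework must be handled with care. The first is the rigorous passage from $\omega=0$ to $\bfv\equiv\0$: in the weak class one cannot read this off directly from the boundary condition, and it must be obtained through the dissipation identity as in \cite[Lemma 1]{GaMaNe}. The second is the treatment of the profile \eqref{91} near vacuum: because of the $(\,\cdot\,)_+$ truncation the density may vanish on part of the cavity, so both the integration of the ODE and the weak meaning of $\nabla p(\varrho)=\varrho\,\bfg$ across the free boundary rest on convexity of $\calc_0$, which keeps the fluid region connected and the integration constant single-valued.
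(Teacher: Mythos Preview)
Your proposal is correct and follows essentially the same route as the paper: the theorem is stated as a summary of the computations carried out in the subsection preceding it, and you have organized those computations---$\omega=0$ from $\bfg\cdot\bfe_3=0$, then $\bfv\equiv\0$ via \cite[Lemma~1]{GaMaNe}, then the ODE integration yielding \eqref{91} under convexity, then the center-of-mass reduction of \eqref{eq:steady.2}$_2$---into exactly the same chain. Your explicit treatment of sufficiency and your identification of the two delicate points (the dissipation argument in the weak class, and the role of convexity in keeping the integration constant single-valued across a possible vacuum interface) are welcome additions that the paper leaves implicit.
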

\subsection{Some relevant consequences of Theorem \ref{th:char}}\label{RelCon}
We would like to analyze some interesting   conclusions that can be drawn as corollary to the previous theorem. 
\par We begin to observe that
from \eqref{Nap} it follows that, in general,  in the equilibrium configuration, $\bfy_C\equiv\overrightarrow{OC}$ is {\em not} aligned with $\hat{\bfg}$ (namely, $\bfe_1$).  In fact, this alignment  occurs if and only if
\be
\int_{\mathcal C_{eq}}r_s(y_1)y_3\,{\rm d}y=0\,. 
\label{10}
\ee
The validity or invalidity of \eqref{10} may depend  on the location of the cavity with respect to $O$ (or, equivalently, $G$) and its shape. In particular, \eqref{10} holds if $G$ and $\mathcal C_{eq}$ are such that $(\cdot,y_3)\in\mathcal C_{eq}\, \Rightarrow\, (\cdot,-y_3)\in\mathcal C_{eq}$, but may not hold otherwise. A simple example is shown in the following figure.
\smallskip\par
\centerline{\includegraphics[width=3.49in,height=1.37in,keepaspectratio]{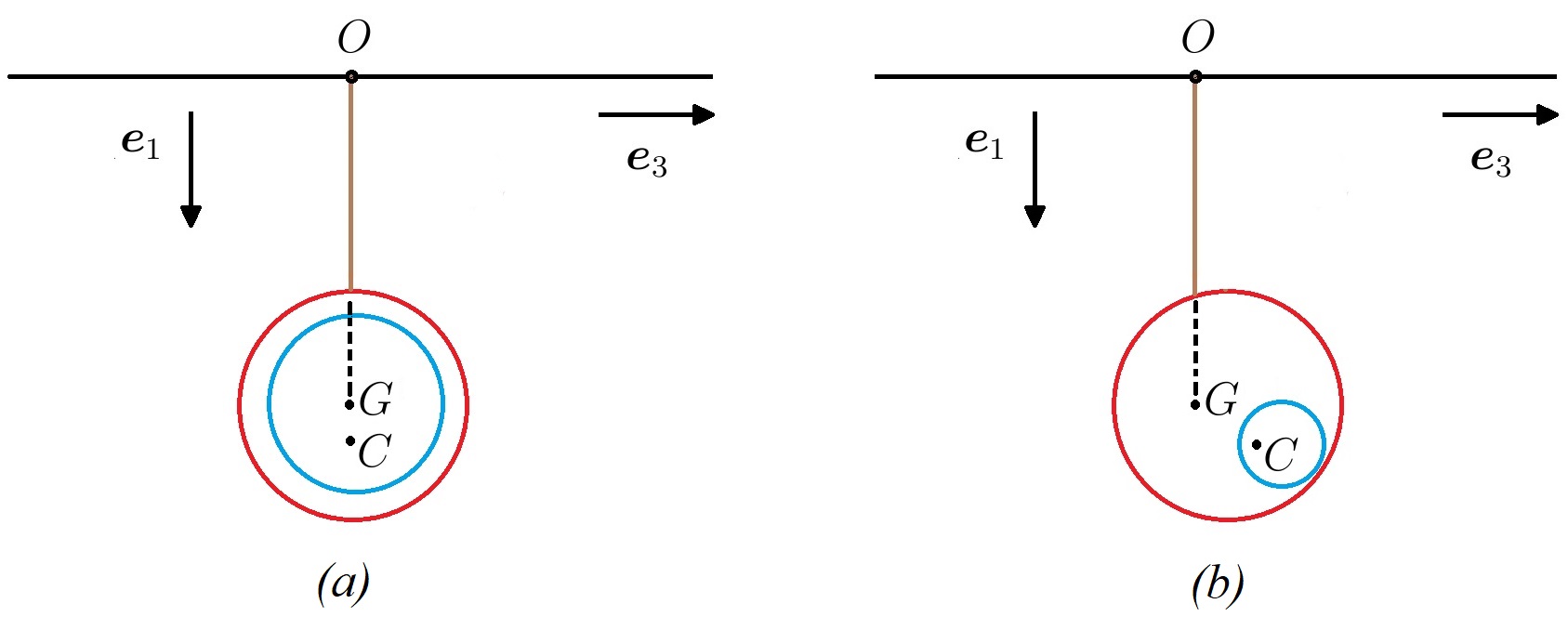}}
\begin{quote}{\rm \small Figure 1.: {\rm\footnotesize  {\em (a)} $\mathscr B$ is a homogeneous spherical shell with the cavity (blue) being the inner sphere. In this case $\bfy_C$ and $ \hat{\bfg}$ are parallel with same orientation; {\em (b)} $\mathscr B$ is a homogeneous sphere with an off-centered interior spherical cavity (blue). $\bfy_C$ and $\hat{\bfg}$ are not parallel.}}
\end{quote}
\smallskip\par
We shall next consider the case when the system $\mathscr S$ possesses some suitable symmetries.  Precisely, let $G\neq O$, and suppose  the cavity  $\calc$ is a body of revolution around the axis ${\sf e}:=\overrightarrow{OG}/|\overrightarrow{OG}|$. Moreover,  denote by $\alpha\in [0,2\pi)$ the angle between ${\sf e}$ and $\hat{\bfg}$. 
\smallskip\par
{\rm (1)} \,Consider the configurations of $\mathscr B$ (in the inertial frame) where ${\sf e}$ is parallel to $\hat{\bfg}$, namely, ${\sf e}=\pm\bfe_1$ corresponding to $\alpha=0,\pi$. Clearly, these are (the only two) equilibrium configurations, $\mathscr B_{eq}^{\pm}$, for $\mathscr B$, since we have
\be
\int_{\mathscr B_{eq}^{\pm}} r_\mathscr B(\bfy)y_i\,{\rm d}y=0\,,\ \ i=2,3\,.
\label{Ex1}
\ee
However, when $\mathscr B$ is in either one of the configurations $ \mathscr B_{eq}^{\pm} $,  the coupled system $\mathscr S$ is in equilibrium as well. In fact, denoted by $\calc_{eq}^{\pm}$ the positions of $\calc$ when $\mathscr B=\mathscr B_{eq}^{\pm}$,  owing to  the symmetry properties of $\calc$, we deduce
\be
\int_{\calc_{eq}^{\pm}} r_s(y_1)y_i\,{\rm d}y=0\,,\ \ i=2,3\,.
\label{Ex2}\ee 
Therefore, \eqref{Nap} follows from  \eqref{Ex1} and \eqref{Ex2}.\smallskip \par
{\rm (2)} \,Let us comment about the possible uniqueness of the above ``vertical" configurations. To this end,
denote by ${\sf G}={\sf G}(\alpha)$ the center of mass of $\calc$ with the density distribution given in \eqref{91}. In view of { Theorem \ref{th:char}(iii)},  equilibrium configurations for $\alpha\in (0,2\pi)$ may exist if and only if $C(\alpha)\in{\sf V}$, namely,
\be\label{wa}
{\sf w}_\alpha:=[M\,\overrightarrow{O{\sf G}}(\alpha)+m_{\mathscr B}\, \overrightarrow{OG}]\cdot\bfe_2=0\,,
\ee 
with $m_{\mathscr B}$ mass of $\mathscr B$. 
Since ${\sf w}_{0}={\sf w}_{\pi}=0$, this suggests that, for suitable $r_s$ (that is, $a,\gamma$ and $M$)  and $\calc$, the location of ${\sf G}(\alpha)$ may vary with $\alpha$ in such a way  that \eqref{wa} is satisfied also for $\alpha$ close to $0,\pi$, thus entailing the existence of some other equilibria,  around $\alpha=0,\pi$; see { Remark} \ref{ex:nonun}.  However, the latter circumstance is ruled out if $\calc$ has suitable symmetry.
 For example, assume that $\calc$ is a ball  centered at $O'$, and set ${\sf R}=|O'O|$.  Because $r_s=r_s(y_1)$ and of the  symmetry properties of $\calc$, it follows that ${\sf G}(\alpha)$ belongs to the straight line  parallel to $\bfe_1$ and passing through $O'$, for any $\alpha\in [0,2\pi)$. So, denoting by $\{O',\bfe_1'\}$ the frame with origin at $O'$ and $\bfe_1'$ parallel to and oriented as $\bfe_1$, we have $\overrightarrow{O'{\sf G}}=\ell'\bfe_1'$ ($\equiv \ell'\bfe_1$), for some $\ell'\in \real$. Notice that $\ell'$ is independent of $\alpha$. Setting $\ell:=|OG|$, we then infer
$$
\overrightarrow{O{\sf G}}=\overrightarrow{O{O'}}+\overrightarrow{O'{\sf G}}=({\sf R}\cos\alpha+\ell')\bfe_1-{\sf R}\sin\alpha\bfe_2\,,\ \ \overrightarrow{O{G}}=\ell(\cos\alpha\bfe_1-\sin\alpha\bfe_2)\,,
$$
and, consequently,  condition \eqref{wa} becomes
\be\label{eq:eq}
(M\,{\sf R}+m_{\mathscr B}\,{\ell})\sin\alpha=0\,,
\ee
that is satisfied if and only if $\alpha=0,\pi$, which means that  
 the  configurations discussed in {\rm (1)} are the {\em only possible equilibria} for $\mathscr S$. \smallskip\par
{\rm (3)} \,If $G$ lies outside the cavity $\calc$, there exists a ``critical angle," $\alpha_c>0$, such that no equilibrium is allowed for $\alpha\in(\alpha_c,\pi-\alpha_c)$.  In fact, let $\Gamma$ be the smallest cone having vertex at $G$ and containing $\calc$. Then, on the one hand, $C$ must belong to the segment $\overline{G{\sf G}}$ which, on the other hand, must be in the interior of $\Gamma$ or, at most, overlap with one of its generatrices. Thus, $\alpha_c$ is precisely the least value of $\alpha$ for which $\Gamma\cap {\sf V}=\emptyset$.
\smallskip\par
Suppose now $G=O$ and let $\calc$ be a ball centered at $O$. Then, clearly,
\be\label{RV}
\int_{\mathscr B_{eq}}r_{\mathscr B}(\bfy)\bfy\,{\rm d}y=0\,, 
\ee
for all $\mathscr B_{eq}$. Moreover, by symmetry,
\be\label{RV1}
\int_{\mathcal C_{eq}}r_{s}(y_1)y_i\,{\rm d}y=0\,,\ \ i=2,3\,,\ee
for all $\mathcal C_{eq}$. From \eqref{RV} and \eqref{RV1} we deduce that \eqref{16} is satisfied in  
every position of $\mathscr S$, implying the existence of a continuum of equilibrium configurations.
\medskip\par
Throughout this section, we have provided explicit examples of solutions to the steady-state problem \eqref{eq:steady.2}, as a consequence of their characterization furnished in Theorem \ref{th:char}. However, at this stage, we do not know if  \eqref{eq:compressible.second}--\eqref{eq:steady.2} admits a solution for every given $\mathscr B$ and $\mathcal C$. The (positive) answer to this question will be furnished in the following subsection.
\subsection{Existence}

Objective of this subsection is to show existence to the problem  \eqref{eq:compressible.second}--\eqref{eq:steady.2}. The main question to address here is not to find the distribution of density  (this was already done in the previous subsection) but, instead,  to provide the existence of  an orientation of $\mathscr S$ with respect to $\bfg$ compatible with a steady state ({\em equilibrium configurations}) or, equivalently, the matrix $\mathbb Q_0$ introduced in the previous section.

To reach this objective, we notice, as before,  that the first equation in \eqref{eq:steady.2}  entails
\begin{equation}\label{eq:density.relation}
\varrho = \sqrt[\gamma-1]{\left(\frac{\gamma-1}{a\gamma}  \bfx \cdot \vg  + c\right)_+}
\end{equation}
for some constant $c\in \real$.  We also recall that  we are assuming $\calc$  convex, and thus $\supp \varrho$ has just one connected component;  see \cite{FePe2}. 

Denote by $\mathcal P$ the projection $\mathbb R^3\to\mathbb R^3$, $\mathcal P:(x_1,x_2,x_3)\mapsto (x_1,x_2,0)$. The second equation in \eqref{eq:steady.2} then yields that $\vg$ is parallel to $\mathcal P\left(\int_{\mathcal S} \hat{\varrho}(\bfx){\bfx}\,  \dx \right)$.\footnote{For simplicity of notation, in what follows we set $\mathscr S_0\equiv\mathscr S$, $\mathscr B\equiv\mathscr B_0$, and $\calc\equiv\calc_0$.}  Thus, setting
$${\bf l} := \mathcal P\left(\int_{\mathscr B} \varrho_{\mathscr B}(\bfx) \bfx \,  \dx \right)\,,
$$
we infer that \eqref{eq:compressible.second}--\eqref{eq:steady.2} is equivalent to the following system of four equations
\begin{equation}\label{eq:steady.ready}
\begin{split}
\int_{\calc}\sqrt[\gamma-1]{\left(\frac{\gamma-1}{a\gamma}  \bfx \cdot \vg  + c\right)_+}\,  \dx  - \int_{\calc} \varrho_0\,  \dx  & = 0\\
d \vg - \Pe \left(\int_{\calc} \sqrt[\gamma-1]{\left(\frac{\gamma-1}{a\gamma}  \bfx \cdot \vg  + c\right)_+} \bfx \,  \dx \right) - {\bf l}& = 0\\
|\vg|^2 - |\vg_0|^2 &= 0
\end{split}
\end{equation}
for four unknowns: $\vg = (g_1,g_2,0)\in \mathbb R^3$, $ c\in \mathbb R$, and $d\in \mathbb R$. 

In view of the above and of what established in the previous subsection,  we can then state the following lemma.
\begin{lemma}\label{lemm3.5}
Let $\vv$, $\varrho$, $\omega$ and $\vg$ be a renormalized weak solution to \eqref{eq:compressible.steady}. Then
$\vv = 0,\ \omega = 0, \ \varrho$ is given by \eqref{eq:density.relation}
and $\vg$ satisfies \eqref{eq:steady.ready}.
\end{lemma}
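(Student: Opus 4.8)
The statement consolidates the facts already extracted in Subsections 3.1 and 3.3, so the plan is to verify each conclusion in turn, reading it directly off the steady weak formulation \eqref{eq:compressible.steady}. First I would establish $\omega=0$. The key input is the orthogonality $\vg\cdot\ve_3=0$, which holds because the steady value of $\vg$ equals $\vg_0=\mathbb Q_0^\top\cdot\hat{\bfg}$ with $\hat{\bfg}=g\bfe_1\perp\bfe_3$ and $\mathbb Q_0^\top\cdot\ve_3=\ve_3$ by \eqref{VFM}; equivalently, $\tfrac{d}{dt}(\vg\cdot\ve_3)=-\omega(\ve_3\times\vg)\cdot\ve_3=0$ preserves the initial value $0$. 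Together with $|\vg|=g>0$ from \eqref{eq:compressible.grav}, this gives $\ve_3\times\vg\neq\0$, so the last equation of \eqref{eq:compressible.steady} forces $\omega=0$. Consequently the boundary datum in \eqref{eq:compressible.steady} becomes $\bfu=\0$ on $\partial\calc_0$, and $\bfv=\bfu$.

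The crux is showing $\bfv=0$. With $\omega=0$ the steady momentum balance reduces to $\diver(\varrho\bfv\otimes\bfv)+\nabla p(\varrho)-\diver\mathbb S(\bfv)=\varrho\vg$, together with $\diver(\varrho\bfv)=0$ and $\bfv|_{\partial\calc_0}=\0$. The plan is the standard steady dissipation argument: test the momentum equation with $\bfv$ and integrate over $\calc_0$. Using $\diver(\varrho\bfv)=0$ and $\bfv|_{\partial\calc_0}=\0$, the convective term integrates to zero; the gravity work $\int_{\calc_0}\varrho\,\vg\cdot\bfv\dx$ vanishes as well, since $\vg$ is a constant vector and hence $\vg\cdot\bfx$ is a potential; and the pressure work $\int_{\calc_0}p(\varrho)\diver\bfv\dx$ is annihilated by the renormalized continuity equation with the choice $b=P$ (for which $\varrho P'(\varrho)-P(\varrho)=p(\varrho)$), integrated over $\calc_0$. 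What survives is $\int_{\calc_0}\mathbb S(\bfv):\nabla\bfv\dx=0$. Since the integrand equals $2\mu|\mathbb D(\bfv)-\tfrac13(\diver\bfv)\mathbb I|^2+\lambda(\diver\bfv)^2\ge0$ (as $\mu>0$, $\lambda\ge0$), its vanishing forces the symmetric gradient $\mathbb D(\bfv)$ to be degenerate, so $\bfv$ is an infinitesimal rigid field, and $\bfv|_{\partial\calc_0}=\0$ then gives $\bfv\equiv\0$. This is the step demanding the most care in the weak class — the admissibility of $\bfv$ as a test function and the rigorous justification of the integrations by parts — and it is precisely the content of \cite[Lemma~1]{GaMaNe}, whose argument I would invoke verbatim.

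With $\omega=0$ and $\bfv=0$, the system \eqref{eq:compressible.steady} collapses to \eqref{eq:steady.2}. On the set $\{\varrho>0\}$ the relation $\nabla p(\varrho)=\varrho\vg$ with $p=a\varrho^\gamma$ is equivalent to $\nabla(\varrho^{\gamma-1})=\tfrac{\gamma-1}{a\gamma}\vg$, whose integration, together with the nonnegativity of $\varrho$ and the connectedness of $\supp\varrho$ guaranteed by the convexity of $\calc$ (see \cite{FePe2}), yields \eqref{eq:density.relation} throughout $\calc_0$. It then remains to assemble \eqref{eq:steady.ready}: the mass constraint \eqref{eq:compressible.second} gives its first equation; the axial condition \eqref{eq:steady.2}$_2$, namely $\ve_3\cdot(\int_{\cals_0}\hat{\varrho}\bfx\dx)\times\vg=0$, says that the horizontal projection $\Pe(\int_{\cals_0}\hat{\varrho}\bfx\dx)$ is parallel to the horizontal vector $\vg$, hence equals $d\vg$ for some $d\in\real$; splitting this integral into its contributions over $\calc$ and over $\mathscr B$ (the latter being ${\bf l}$) produces the second equation; and \eqref{eq:compressible.grav} is the third. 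Substituting the explicit density \eqref{eq:density.relation} into these three identities gives exactly \eqref{eq:steady.ready}.

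The main obstacle is the second step: everything else is either an integration of an ordinary differential equation or a direct restatement of the constraints, whereas the vanishing of $\bfv$ is the one genuinely analytic point, resting on the positivity of the viscous dissipation and on justifying the test-function manipulations in the renormalized weak setting.
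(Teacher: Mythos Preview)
Your proposal is correct and follows the paper's own route essentially verbatim: the paper obtains $\omega=0$ from $\vg\cdot\ve_3=0$ and \eqref{eq:compressible.steady}$_5$, invokes \cite[Lemma~1]{GaMaNe} for $\bfv=0$, integrates \eqref{eq:steady.2}$_1$ (with convexity and \cite{FePe2}) to get \eqref{eq:density.relation}, and then rewrites \eqref{eq:steady.2}$_2$ together with the mass and gravity constraints as \eqref{eq:steady.ready}. Your account is in fact more explicit than the paper's, which presents Lemma~\ref{lemm3.5} as a summary of the surrounding discussion and gives no separate proof block; the dissipation argument you sketch for $\bfv=0$ is exactly the content of the cited lemma.
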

\begin{remark}
We would like to explain the meaning of the parameter $d$. Equation \eqref{eq:steady.ready}$_2$ can be rearranged as
$$
d{\bf g} = \mathcal P\left(\int_{\mathcal C} \sqrt[\gamma-1]{\left(\frac{\gamma-1}{2\gamma}  \bfx \cdot \vg  + c\right)_+} \bfx \dx \right) + {\bf l}.
$$
As we know from previous subsection (see \eqref{Nap}), the right-hand side of this equation is the vector $\overrightarrow{OC}$, with $C$ center of mass of the whole system at equilibrium. Thus, $d\neq 0$ means that $\overrightarrow{OC}$ and $\bfg$ must be parallel. Moreover, $d$ positive means that the $\bfg$ and $\overrightarrow{OC}$ have the same orientation ($C$ is below the hinge), whereas $d$ negative means the opposite ($C$ is above the hinge).
\end{remark}

By using the standard theory associated to the Euler--Lagrange equations, we can show the following result.

\begin{lemma}\label{lemm3.6}
Let $\varrho_s\in L^\gamma(\calc)$ and $\vg_s\in \mathbb R^2\times \{0\}$ be a minimizer of the functional
$$
\mathcal I: (\varrho, \vg) \mapsto \mathcal E (\varrho, 0,0,\vg)
$$
with $\mathcal I$  defined on the set $$\{\varrho\in L^\gamma, \vg \in \mathbb R^2\times \{0\},\ \varrho \geq 0,\ |\vg| = |\vg_0|,\ \int_{\calc}\varrho(\bfx)\,  \dx  = \int_\calc \varrho_0(\bfx)\,  \dx  \}.$$
Then $\varrho_s$ and $\vg_s$ solve the system \eqref{eq:density.relation} and \eqref{eq:steady.ready}.
\end{lemma}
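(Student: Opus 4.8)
The plan is to treat Lemma~\ref{lemm3.6} as a first-order optimality statement: I would derive \eqref{eq:density.relation} and \eqref{eq:steady.ready} as the Euler--Lagrange conditions satisfied by the minimizer, handling the density and the gravity direction as two independent families of admissible variations. Since the minimizer has $\bfu=0$ and $\omega=0$, the functional reduces, by \eqref{EnEr}, to
$$
\mathcal I(\varrho,\vg)=\int_{\calc}\big[P(\varrho)-P'(\overline\varrho)(\varrho-\overline\varrho)-P(\overline\varrho)\big]\dx-\int_{\mathscr S}\hat\varrho\,\bfx\cdot\vg\dx,
$$
in which only the restriction of $\varrho$ to $\calc$ is a free variable (the density $\varrho_{\mathscr B}$ on $\mathscr B$ being prescribed). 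Because $P(s)=\tfrac{a}{\gamma-1}s^{\gamma}$ with $\gamma>1$ is convex and $C^1$ on $[0,\infty)$, the integrand is convex and differentiable in $\varrho$, which both guarantees that $\mathcal I$ is G\^ateaux differentiable along admissible directions and makes these necessary conditions in fact sufficient.

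First I would vary the density, keeping $\vg=\vg_s$ fixed and incorporating the mass constraint $\int_\calc\varrho=M$ through a Lagrange multiplier $\mu$. On the positivity set $\{\varrho_s>0\}$ one may perturb $\varrho_s$ by $t\psi$ with $\psi\in L^\infty(\calc)$ compactly supported in $\{\varrho_s>\delta\}$ for both signs of small $t$, so stationarity of $\mathcal I-\mu\int_\calc\varrho$ gives
$$
P'(\varrho_s)-P'(\overline\varrho)-\bfx\cdot\vg_s-\mu=0 \qquad\text{a.e. on } \{\varrho_s>0\}.
$$
Since $P'(s)=\tfrac{a\gamma}{\gamma-1}s^{\gamma-1}$, this solves to $\varrho_s^{\gamma-1}=\tfrac{\gamma-1}{a\gamma}\bfx\cdot\vg_s+c$ with $c:=\tfrac{\gamma-1}{a\gamma}\big(P'(\overline\varrho)+\mu\big)$. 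On the complementary set $\{\varrho_s=0\}$ only one-sided perturbations $\psi\ge0$ are admissible, which turns the equality into the inequality $P'(0)-P'(\overline\varrho)-\bfx\cdot\vg_s-\mu\ge0$; as $P'(0)=0$ this reads $\tfrac{\gamma-1}{a\gamma}\bfx\cdot\vg_s+c\le0$ there. Combining the two regimes yields precisely the positive-part formula \eqref{eq:density.relation}, while the mass constraint is \eqref{eq:steady.ready}$_1$.

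Next I would vary the gravity vector. Its only appearance in $\mathcal I$ is through the linear term $-\int_{\mathscr S}\hat\varrho\,\bfx\cdot\vg$, whose gradient in the plane $\mathbb R^2\times\{0\}$ is $-\mathcal P\big(\int_{\mathscr S}\hat\varrho\,\bfx\dx\big)$. Minimizing over the circle $\{|\vg|=|\vg_0|\}$ and using a Lagrange multiplier for the constraint $|\vg|^2$, the optimality condition is that this gradient be radial, i.e.\ parallel to $\vg_s$; hence there is a scalar $d$ with
$$
d\,\vg_s=\mathcal P\Big(\int_{\mathscr S}\hat\varrho\,\bfx\dx\Big)=\mathcal P\Big(\int_\calc\varrho_s\,\bfx\dx\Big)+\vl,
$$
after splitting $\mathscr S=\calc\cup\mathscr B$ and recalling $\vl=\mathcal P\big(\int_{\mathscr B}\varrho_{\mathscr B}\bfx\dx\big)$. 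This is exactly \eqref{eq:steady.ready}$_2$ once $\varrho_s$ is replaced by \eqref{eq:density.relation}, and the constraint itself furnishes \eqref{eq:steady.ready}$_3$. Note that the single genuine (tangential) optimality condition on the circle, together with the definition of $d$ as the radial component, accounts for the two scalar equations in the vector relation above, consistently with the four unknowns $(g_1,g_2,c,d)$ and four equations of \eqref{eq:steady.ready}.

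The step I expect to be delicate is the density variation under the pointwise constraint $\varrho\ge0$: one must select admissible directions that keep the perturbation non-negative and in $L^\gamma$ (bounded and supported away from $\{\varrho_s=0\}$ for two-sided variations, non-negative and bounded for one-sided), justify passing the $t$-derivative inside the integral via the convexity and growth of $P$ together with dominated convergence, and then verify that the a.e.\ equality on $\{\varrho_s>0\}$ and the a.e.\ inequality on $\{\varrho_s=0\}$ glue into the single global formula \eqref{eq:density.relation}. By contrast, the variation in $\vg$ is a routine finite-dimensional Lagrange-multiplier computation.
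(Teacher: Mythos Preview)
Your proposal is correct and follows the same overall strategy as the paper: freeze one of the two variables $(\varrho,\vg)$, derive the first-order optimality condition in the other, and identify the constraint multipliers with the constants $c$ and $d$ in \eqref{eq:steady.ready}. The variation in $\vg$ is carried out identically in both arguments.

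The genuine methodological difference lies in the density variation. The paper invokes the subdifferential/normal-cone criterion $0\in\partial\mathcal I_{\vg_s}(\varrho_s)+N_K(\varrho_s)$ from \cite[Cor.~2.184]{CaLeMo} and then argues that every element of $N_K(\varrho_s)$ is constant on $\supp\varrho_s$, which yields the Euler--Lagrange equation only on the support; the behaviour on $\{\varrho_s=0\}$ is not discussed. Your direct approach---two-sided perturbations supported in $\{\varrho_s>\delta\}$ to get the equality, one-sided non-negative perturbations on $\{\varrho_s=0\}$ to get the complementary inequality---is more elementary, avoids the external reference, and in fact gives a slightly more complete justification of the positive-part formula \eqref{eq:density.relation}. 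The price is the bookkeeping you flag yourself: one must construct mean-zero admissible perturbations (balancing a non-negative bump on the vacuum set against a compensating one on $\{\varrho_s>\delta\}$) rather than introduce the multiplier $\mu$ by fiat, and justify differentiation under the integral. These are routine, and your acknowledgement of them is appropriate for a proof outline.
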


\begin{proof}
In particular, for fixed $\varrho_s$, $\vg_s$ is a minimizer of the smooth functional
$$
\mathcal I_{\varrho_s}:\vg\mapsto \mathcal I(\varrho_s,\vg)
$$
where, for simplicity, we assume $\vg\in \mathbb R^2$ as only the first and second components of $\vg$ matter. We also assume that $\vg$ satisfies the constraint \eqref{eq:compressible.grav}. 
Then the standard results for Lagrangian multipliers yields
$$
- \mathcal P\left(\int_{\mathcal S} \hat{\varrho}(\bfx) \bfx\,  \dx \right) - 2\lambda \vg_s = 0 
$$
for some $\lambda\in \mathbb R$. We get \eqref{eq:steady.ready}$_2$ assuming $\varrho$ is given by \eqref{eq:density.relation} and $d = -2 \lambda$.
Likewise, for fixed $\vg_s$, $\varrho_s$ is  a minimizer of the functional
$$
\mathcal I_{{\footnotesize \vg_s}}:\varrho \mapsto \mathcal I(\varrho,\vg_s)
$$
where $\varrho$ ranges in the nonempty closed convex set 
$$
K:=\left\{\varrho\in L^\gamma,\ \int_{\mathcal C}\varrho\,  \dx  = \int_{\mathcal C}\varrho_0\,  \dx ,\ \varrho\geq 0\ \mbox{on }\mathcal C\right\}.
$$
According to \cite[Corollary 2.184 $\&$ Example 2.186]{CaLeMo},
the minimizer $\varrho_s$ satisfies
\begin{equation}\label{eq:stationary.point}
0\ni \partial \mathcal I_{\vg_s}(\varrho_s) + N_K(\varrho_s)
\end{equation}
where $\partial$ denotes Frech\'et derivative and $N_K(\varrho_s)$ is the  normal cone defined by
$$
N_K(\varrho_s) := \left\{\eta\in L^{\gamma'},\ \forall f\in K\ \int_{\mathcal C} \eta(\bfx)(f(\bfx)-\varrho_0(\bfx))\,  \dx \leq 0\right\}.
$$
We now analyze the structure of $N_K$. First, let $\eta$ be a constant. Then
$$
\int_{\mathcal C} \eta(\bfx)(f(\bfx)-\varrho_s(\bfx))\,  \dx  = 0 \ \ \mbox{for all $f\in K$}\,,
$$
which implies that  every constant function belongs to $N_K$. Next, let $\eta\in N_K$, and let us show that  $\eta|_{\supp \varrho_s}$ must be  a constant. Suppose otherwise. Without loss of generality, we  may assume $\int_{\supp \varrho_s}\eta\dx =0$. Consequently, there exist $\varepsilon>0$ and sets $A, B\subset \supp \varrho_s$ of positive measure such that $\eta|_A>\varepsilon$, $\eta|_{B}<-\varepsilon$, $\varrho_s|_{A\cup B}>\varepsilon$ and $|A| = |B|$.
Take $f = \varrho_s + \varepsilon(\chi_A - \chi_B)$ where $\chi_S$ is the  characteristic function of the set $S$. Then 
$$
\int_{\mathcal C} \eta(\bfx) (f(\bfx)-\varrho(\bfx))\,  \dx  > 0\,,
$$
which yields that $\eta$ is not in $N_K$: a contraddiction. Summing up,  we can thus state that  every function in $N_K$ is constant on $\supp\varrho_s$ and every constant belongs to $N_K$.
Consequently, \eqref{eq:stationary.point} yields
$$
 P'(\varrho_s) - P'(\overline\varrho) - \bfx \cdot \vg_s + \lambda = 0\ \mbox{on }\supp \varrho_s\,,
$$
for some $\lambda\in \mathbb R$. Since $P'(\varrho_s) = \frac{a\gamma}{\gamma-1}\varrho^{\gamma-1}$, this equation implies \eqref{eq:density.relation} with $c = \frac{(P'(\overline\varrho) - \lambda)(\gamma-1)}{a\gamma}$.

\par\end{proof}
We are now in a position to show our existence result. 
\begin{theorem}\label{th:exi}
Suppose $\calc$ convex. Then \eqref{eq:compressible.second}--\eqref{eq:steady.2} has at least one solution.
\end{theorem}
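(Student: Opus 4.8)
The plan is to deduce the theorem from Lemma \ref{lemm3.6} by exhibiting a minimizer of the functional $\mathcal I$ via the direct method of the calculus of variations. Indeed, by the equivalence established just before Lemma \ref{lemm3.5}, a pair $(\varrho_s,\vg_s)$ that satisfies \eqref{eq:density.relation} and \eqref{eq:steady.ready} is precisely a solution of \eqref{eq:compressible.second}--\eqref{eq:steady.2}; and Lemma \ref{lemm3.6} guarantees that any minimizer of $\mathcal I$ over the admissible set
$$
\mathcal A:=\left\{(\varrho,\vg):\ \varrho\in L^\gamma(\calc),\ \varrho\ge 0,\ \vg\in\mathbb R^2\times\{0\},\ |\vg|=|\vg_0|,\ \int_{\calc}\varrho\dx=\int_{\calc}\varrho_0\dx\right\}
$$
enjoys exactly these two properties. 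Hence it suffices to prove that $\mathcal I$ attains its infimum on $\mathcal A$. The set $\mathcal A$ is nonempty, since the constant density $\overline\varrho=M/|\calc|$ paired with $\vg=\vg_0$ belongs to it.

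First I would check coercivity and boundedness from below. With $\bfu=0$ and $\omega=0$, the functional reduces to $\mathcal I(\varrho,\vg)=\int_{\calc}\big(P(\varrho)-P'(\overline\varrho)(\varrho-\overline\varrho)-P(\overline\varrho)\big)\dx-\int_{\cals_0}\hat\varrho\,\bfx\cdot\vg\dx$. The integrand of the first term is the Bregman divergence of the strictly convex function $P(\varrho)=\frac{a}{\gamma-1}\varrho^\gamma$; it is nonnegative and of order $\varrho^\gamma$, so this term controls $c\|\varrho\|_{L^\gamma}^\gamma-C$. The potential term splits into a fixed constant over $\mathscr B$ plus $-\int_{\calc}\varrho\,\bfx\cdot\vg\dx$, and the latter is bounded on $\mathcal A$ by $g\,\sup_{\calc}|\bfx|\,M$ because both the mass $M$ and $|\vg|=g$ are prescribed. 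Therefore $\mathcal I$ is bounded below on $\mathcal A$ and coercive in $\varrho$ for the $L^\gamma$-norm.

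Then I would run the direct method. Taking a minimizing sequence $(\varrho_n,\vg_n)\subset\mathcal A$, coercivity bounds $\varrho_n$ in $L^\gamma(\calc)$, so along a subsequence $\varrho_n\rightharpoonup\varrho_s$ weakly in $L^\gamma$; since the constraint $|\vg_n|=g$ confines $\vg_n$ to a compact subset of a finite-dimensional space, a further subsequence gives $\vg_n\to\vg_s$. Every constraint survives the limit: the convex closed cone $\{\varrho\ge0\}$ and the affine mass constraint are weakly closed, yielding $\varrho_s\ge0$ and $\int_{\calc}\varrho_s\dx=M$, while continuity gives $|\vg_s|=g$; thus $(\varrho_s,\vg_s)\in\mathcal A$. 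Because $P$ is convex, $\varrho\mapsto\int_{\calc}P(\varrho)\dx$ is weakly lower semicontinuous, and the remaining terms are (weakly) continuous under $\varrho_n\rightharpoonup\varrho_s$, $\vg_n\to\vg_s$; hence $\mathcal I(\varrho_s,\vg_s)\le\liminf_n\mathcal I(\varrho_n,\vg_n)=\inf_{\mathcal A}\mathcal I$, so $(\varrho_s,\vg_s)$ is a minimizer, and Lemma \ref{lemm3.6} completes the argument.

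I expect the genuine difficulty to lie not in the compactness, which is routine, but in the step---carried out inside Lemma \ref{lemm3.6}---that upgrades the Euler--Lagrange identity $P'(\varrho_s)-P'(\overline\varrho)-\bfx\cdot\vg_s+\lambda=0$, valid a priori only on $\supp\varrho_s$, to the global profile \eqref{eq:density.relation} with the truncation $(\cdot)_+$. This is exactly where the convexity of $\calc$ is indispensable: it forces $\supp\varrho_s$ to be connected and to coincide with the half-space level set $\{\frac{\gamma-1}{a\gamma}\bfx\cdot\vg_s+c>0\}\cap\calc$ (cf.\ \cite{FePe2}), so that the relation on the support extends consistently to all of $\calc$. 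A minor additional point is that the constraint $|\vg|=|\vg_0|$ is not convex, which precludes a purely convex-analytic treatment of $\vg$; this is harmless here precisely because $\vg$ ranges over a compact set in finite dimensions.
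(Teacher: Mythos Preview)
Your argument is correct and rests on the same ingredients as the paper's, but the organization differs. The paper performs a \emph{two-stage} minimization: for each fixed $\vg$ it minimizes $\varrho\mapsto\mathcal E(\varrho,0,0,\vg)$ over $\cala_0$ by the direct method (invoking lower semicontinuity from \cite{FoLe}), and then shows that $f(\vg):=\min_\varrho\mathcal E(\varrho,0,0,\vg)$ is Lipschitz---hence continuous---on the compact circle $\{|\vg|=|\vg_0|\}$, so that $f$ attains its minimum there. You instead run the direct method \emph{jointly} on $\mathcal A$, using compactness of the $\vg$-sphere in $\mathbb R^2$ to extract a strongly convergent subsequence in $\vg$ alongside a weakly convergent one in $\varrho$, and then check that the cross term $-\int_{\calc}\varrho\,\bfx\cdot\vg\dx$ is continuous for the weak$\times$strong product topology. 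Your route is a little more streamlined; the paper's version has the minor advantage of isolating the Lipschitz dependence on $\vg$, which is sometimes useful on its own. One small clarification on your closing remark: in the paper the convexity of $\calc$ is not invoked inside the proof of Lemma~\ref{lemm3.6} itself, but just before it, in passing from \eqref{eq:steady.2}$_1$ to the global profile \eqref{eq:density.relation}; convexity is what forces $\supp\varrho_s$ to have a single connected component (cf.\ \cite{FePe2}), so that the Euler--Lagrange identity on the support matches the truncated formula on all of $\calc$.
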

\begin{proof}
In view of Lemmas \ref{lemm3.5} and \ref{lemm3.6}, we only have to prove that the functional 
$$
\varrho, \vg \mapsto \mathcal E (\varrho, 0,0,\vg)
$$
has at least one minimizer in the set
$$
\mathcal A := \{(\varrho,\vg)\in L^\gamma(\calc)\times (\mathbb R^2\times \{0\}), \varrho \geq 0, \int_\calc \varrho \,  \dx  = \int_\calc \varrho_0 \,  \dx , |\vg| = |\vg_0|  \}.
$$
We begin to show that for a fixed $\vg$, the functional 
$$
\mathcal I_{{\footnotesize \bfg}}:\varrho \mapsto \mathcal E (\varrho, 0, 0,\vg)
$$
defined on
$$
\cala_0:=\{\varrho\in L^\gamma(\calc), \varrho \geq 0, \int_\calc \varrho \dx = \int_\calc \varrho_0\dx\} 
$$
attains there a minimum. Consider the function
$$
P^\infty(z) = \left\{
\begin{array}{l}
P(z) - P'(\overline \varrho)(z-\overline \varrho) - P(\overline\varrho),\ \mbox{for }z\in [0,\infty),\\
+\infty,\ \mbox{for }z\in (-\infty, 0).
\end{array}
\right.
$$ 
and  redefine $\mathcal I_{\footnotesize \bfg}$ in the following way
$$
\mathcal I_{\footnotesize \bfg}:\varrho \mapsto \int_\calc \left( P^\infty(\varrho) + \varrho  \bfx \cdot \vg  \right) \,  \dx .
$$
Owing to  \cite[Theorem 6.54]{FoLe}, this functional is lower semicontinuous, and since $\cala_0$ is convex and closed, we obtain the  existence of a minimizer by the direct method of calculus of variations; see \cite[Section 3.2]{FoLe}.
Next, consider the function
$$
f:\vg \mapsto \min_{\varrho} \mathcal E(\varrho,0,0,\vg)\,,
$$
defined in  $E:=\{\vg\in \mathbb R^2\times \{0\}, |\vg| = |\vg_0|\}$ with values in $\mathbb R$. In order to show the theorem, it remains to prove that $f$  attains a minimum in $E$. Since $E$ is compact, it suffices to check that $f$ is continuous there. 
The definition of $\mathcal E$ yields 
$$
|\mathcal E(\varrho, 0 , 0, \vg_1)  -\mathcal E(\varrho, 0,0,\vg_2)| \leq c|\vg_1 - \vg_2|
$$
with $c$ independent of $\varrho$ (but dependent on $\varrho_0$), from which it follows that
$$
\min_\varrho \mathcal E(\varrho,0,0,\vg_1) \leq \min_\varrho \mathcal E(\varrho,0,0,\vg_2) + c |\vg_1 - \vg_2|.
$$
Interchanging the role of $\vg_1$ and $\vg_2$, we deduce the opposite inequality, which furnishes the desired continuity and thus completes the proof of the theorem.
\par\end{proof}
\subsection{Further comments about uniqueness}\label{RAU}
We shall now provide a result regarding the uniqueness of the ``vertical" equilibrium configurations that relates to what discussed in Section \ref{RelCon}(1). To this end, 
set
$$\Pi(\vg) = \mathcal P \left(\int_{\mathcal C} \sqrt[\gamma-1]{\left(\frac{\gamma-1}{a\gamma}  \bfx \cdot \vg  + c\right)_+} \bfx \,  \dx \right)$$ where the constant $c$ is determined uniquely by \eqref{eq:steady.ready}$_1$.
The following result holds.
\begin{theorem}\label{thm:uniq}
Assume  the cavity $\calc$ is such that, for any $\vg\in \mathbb R^2\times\{0\}$, $|\vg| = |\vg_0|$, 
\begin{equation}\label{ass1}
\left\langle \Pi(\vg) ,\vl \right\rangle > 0\,.
\end{equation}
Then, the corresponding $d$ is not 0. Assume, further, we are in a class of solutions such that
\begin{equation}\label{ass3}
|d|> \delta_2>0\,,
\end{equation}
and 
\begin{equation}\label{ass2}
|\Pi(\vg_1) - \Pi(\vg_2)|\leq \delta_1 |\vg_1 - \vg_2|\,,
\end{equation}
for some $\delta_2>2\delta_1>0$. Then,  there are at most two solutions to \eqref{eq:steady.ready}, one with $d<0$ and the other with $d>0$.
\end{theorem}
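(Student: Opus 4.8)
The plan is to separate the conclusion into two independent parts: that an admissible $d$ cannot vanish, and that within each fixed sign of $d$ there is at most one solution. For the first part, suppose $d=0$ in \eqref{eq:steady.ready}$_2$; then $\Pi(\vg) = -\vl$, so that $\langle \Pi(\vg),\vl\rangle = -|\vl|^2 \le 0$, in direct contradiction with the standing hypothesis \eqref{ass1} (which, incidentally, forces $\vl\neq 0$). Hence every solution in our class has $d\neq 0$, and it is meaningful to sort solutions by the sign of $d$.

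For the uniqueness part I would argue by contradiction within a fixed sign. Suppose $(\varrho_1,\vg_1,d_1)$ and $(\varrho_2,\vg_2,d_2)$ both solve \eqref{eq:steady.ready} with $d_1,d_2$ of the same sign; by \eqref{ass3} we then have $d_1 d_2>0$ and $|d_1|,|d_2|>\delta_2$. Writing \eqref{eq:steady.ready}$_2$ for each solution as $d_i\vg_i=\Pi(\vg_i)+\vl$ and subtracting, the constant vector $\vl$ cancels and we are left with
$$
d_1\vg_1 - d_2\vg_2 = \Pi(\vg_1) - \Pi(\vg_2).
$$
The main obstacle is that $d_1\neq d_2$ in general, so the multiplier $d$ cannot simply be factored out of the left-hand side. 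The device that resolves this is to pair the identity with the difference $\vg_1-\vg_2$ and to exploit $|\vg_1|=|\vg_2|=|\vg_0|=g$, which comes from \eqref{eq:steady.ready}$_3$.

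Taking the inner product of the displayed identity with $\vg_1-\vg_2$ and using $\langle \vg_1,\vg_1-\vg_2\rangle = g^2-\vg_1\cdot\vg_2 = \tfrac12|\vg_1-\vg_2|^2$ together with $\langle \vg_2,\vg_1-\vg_2\rangle = -\tfrac12|\vg_1-\vg_2|^2$, the two cross terms merge into the symmetric factor $d_1+d_2$, yielding
$$
\tfrac12(d_1+d_2)\,|\vg_1-\vg_2|^2 = \langle \Pi(\vg_1)-\Pi(\vg_2),\,\vg_1-\vg_2\rangle .
$$
On the right I bound by Cauchy--Schwarz and the Lipschitz hypothesis \eqref{ass2}, so the right-hand side is at most $\delta_1|\vg_1-\vg_2|^2$ in absolute value; on the left, since $d_1,d_2$ share a sign and both exceed $\delta_2$ in modulus, $|d_1+d_2|=|d_1|+|d_2|>2\delta_2$. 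Taking absolute values and assuming $\vg_1\neq\vg_2$, division by $|\vg_1-\vg_2|^2$ gives $\tfrac12|d_1+d_2|\le\delta_1$, whereas $\tfrac12|d_1+d_2|>\delta_2>2\delta_1>\delta_1$, a contradiction. Hence $\vg_1=\vg_2$, and since $c$ and $\varrho$ are then determined by $\vg$ through \eqref{eq:steady.ready}$_1$ and \eqref{eq:density.relation}, the two solutions coincide.

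This shows there is at most one solution with $d>0$ and at most one with $d<0$, i.e.\ at most two in all. I note that the factor $2$ in the hypothesis $\delta_2>2\delta_1$ is precisely the Lipschitz constant of the radial projection $\vv\mapsto g\,\vv/|\vv|$ that would appear if one instead recast the problem as the fixed-point equation $\vg = g(\Pi(\vg)+\vl)/|\Pi(\vg)+\vl|$ and argued by Banach contraction; the direct pairing sketched above in fact succeeds already under the weaker requirement $\delta_2>\delta_1$, so the stated hypothesis is comfortably sufficient.
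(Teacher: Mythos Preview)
Your proof is correct and follows the paper's overall strategy: show $d\neq 0$ from \eqref{ass1}, then pair the difference of \eqref{eq:steady.ready}$_2$ with $\vg_1-\vg_2$ and invoke the Lipschitz bound \eqref{ass2}. The difference lies in the algebra of that pairing. The paper decomposes $d_1\vg_1-d_2\vg_2=d_1(\vg_1-\vg_2)+(d_1-d_2)\vg_2$, which forces a preliminary estimate $|d_1-d_2|\le(\delta_1/|\vg_0|)|\vg_1-\vg_2|$ and produces two error terms of size $\delta_1|\vg_1-\vg_2|^2$, whence the threshold $\delta_2>2\delta_1$. You instead exploit $|\vg_1|=|\vg_2|$ to obtain $\langle d_1\vg_1-d_2\vg_2,\vg_1-\vg_2\rangle=\tfrac12(d_1+d_2)|\vg_1-\vg_2|^2$ directly, bypassing the bound on $|d_1-d_2|$ and yielding only one $\delta_1$ term. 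As you note, this symmetric decomposition actually works under the weaker assumption $\delta_2>\delta_1$; the paper's stated hypothesis $\delta_2>2\delta_1$ is an artifact of its less symmetric splitting. Your argument for $d\neq0$ is also a touch more explicit than the paper's.
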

\begin{proof}
We begin to notice that  condition \eqref{ass1} is guaranteed once we know, for example, that every $\bfx\in \mathcal C$ satisfies $\langle \bfx,\vl\rangle >0$.
From \eqref{eq:steady.ready} we get 
$$
|d| = \frac{|\Pi(\vg) + \vl|}{|\vg_0|}\,,
$$
which, by \eqref{ass1}, implies $|d|>0$.
We distinguish the two cases   $d>0$ and $d<0$, and begin to treat  the case $d>0$ first. Let $\vg_1, c_1, d_1$ and $\vg_2, c_2, d_2$ be two solutions to \eqref{eq:steady.ready}. Employing \eqref{ass2}, we infer
\begin{equation*}
|d_1 - d_2| = \frac{||\Pi(\vg_1) + \vl| - |\Pi(\vg_2) + \vl||}{|\vg_0|}\leq \frac{|\Pi(\vg_1) - \Pi(\vg_2)|}{|\vg_0|}\leq \frac{\delta_1}{|\vg_0|} |\vg_1 - \vg_2|\,.
\end{equation*}
On the other hand, from \eqref{eq:steady.ready}$_2$ we show 
\begin{multline*}
0=\langle d_1 \vg_1 - d_2 \vg_2 - (\Pi(\vg_1) - \Pi(\vg_2)),\vg_1 - \vg_2\rangle = \\
 d_1 |\vg_1-\vg_2|^2 + (d_1-d_2)\langle \vg_2,\vg_1 - \vg_2\rangle - \langle \Pi(\vg_1) - \Pi(\vg_2), \vg_1 - \vg_2\rangle\\
\geq (\delta_2-2\delta_1) |\vg_1 - \vg_2|^2\geq (\delta_2 - 2\delta_1) |\vg_1- \vg_2|^2
\end{multline*}
and thus,  assuming $d>0$ and $\delta_2-2\delta_1>0$, there is at most one solution.
Note that the same conclusion holds also in the case $d<0$. \par\end{proof}
\begin{remark} The assumptions of Theorem \ref{thm:uniq} are rather significant. In fact, they ensure that the center of mass of the whole system does not vary too much for different directions of gravity. We now show that these assumptions are somehow also necessary, by bringing an example that shows that, if they are violated, the conclusion of the theorem is not  true. Let consider $\mathcal C = (-1,1)\times (-1,1)\times (-1,1)$, $\gamma=2$, and $a=\frac 12$. The total mass is assumed to be 4. Furthermore, the body is such that $\vl = (1,0,0)$. 
Then $\vg_1 = (1,0,0)$ and $\vg_2 = (-1,0,0)$ are two solutions for which the appropriate $c_1$ and $c_2$ is both equal to $1$. However, for $\vg_1$ we have
$$
\Pi(\vg_1) + \vl = (11/3,0,0) = \frac{11}3 \vg_1
$$
and for $\vg_2$ we have
$$
\Pi(\vg_2) + \vl = (-5/3,0,0) = \frac53 \vg_2
$$
and we have two solutions for which $d>0$ (namely, $d_1 = \frac{11}3$ and $d_2 = \frac53$). Notice that in this case, $\delta_1=2$, $|d|=11/3$, so that $|d|<2\delta_1$, and \eqref{ass2} is violated, for all $\delta_2>2\delta_1$.
\label{ex:nonun}
\end{remark}

\section{Global behavior of weak solutions}
\label{sec:3}
For simplicity, in what follow we set $\mathcal C_0\equiv\mathcal C$.
We begin by  stating an existence result of weak solutions.

\begin{theorem}\label{th:4.1}
Let $\mathcal C$ be of class $C^{2+\nu}$, for some $\nu>0$, and let $\varrho_0\in L^\gamma(\mathcal C)$, $\gamma>3/2$, with $\varrho_0|_{\mathscr B} = \varrho_c$, $\varrho_c\in \mathbb R$. Further, let $\vu_0:\mathcal S\to \mathbb R^3$ be such that $\varrho_0|\vu_0|^2\in L^1(\mathcal C)$ and $\vu_0|_{\mathscr B} = \omega \ve_1\times {\bfx}$ for some $\omega\in \mathbb R$. Then there exists a weak solution to \eqref{eq:compressible.NS} in the sense of  {\rm Definition} \ref{def:3.1} on the time interval $(0,T)$, arbitrary $T>0$.
\end{theorem}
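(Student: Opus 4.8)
The plan is to construct a weak solution by a vanishing-viscosity/Galerkin-type approximation scheme adapted to the compressible Navier--Stokes system coupled with the rigid-body ODE \eqref{eq:compressible.NS}$_4$, following the now-standard framework of Lions and Feireisl for the barotropic compressible flow, but with the crucial modification that the fluid domain $\mathcal C$ is fixed (thanks to the change of variables to the $\bfx$-frame) at the price of the extra Coriolis-type term $\varrho\,\omega\,\bfe_3\times\bfu$ and the coupling to $\omega$ through the angular-momentum balance. First I would set up a finite-dimensional approximation of the velocity together with a regularized continuity equation: replace \eqref{eq:compressible.NS}$_2$ by an artificial-viscosity equation $\partial_t\varrho+\Div(\varrho\bfv)=\epsilon\Delta\varrho$ with homogeneous Neumann data, add an artificial-pressure term $\delta\varrho^\beta$ (with $\beta$ large) to the momentum balance to gain integrability, and project the momentum equation onto a finite-dimensional space $\mathbf V_n$ of smooth vector fields satisfying the boundary condition $\bfu=\omega\bfe_3\times\bfx$ on $\partial\mathcal C$. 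Because $\omega$ is itself an unknown coupled through \eqref{eq:compressible.NS}$_4$, the approximate problem is a coupled ODE system for the Galerkin coefficients and $\omega$; local-in-time solvability follows from the Cauchy--Lipschitz theorem, and the \emph{a priori} energy estimate derived from the energy inequality in Definition \ref{def:3.1} (tested with $\bfu$ and $\omega$, using that $\mathbb S(\bfv):\nabla\bfv\ge0$ and $\bfg\cdot\bfe_3=0$) promotes these local solutions to global ones on $(0,T)$.

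The core of the argument is then the successive passage to the limit in the three parameters $n\to\infty$, $\epsilon\to0^+$, $\delta\to0^+$. At each stage the energy estimate supplies uniform bounds: $\omega\in L^\infty(0,T)$, $\sqrt\varrho\,\bfu\in L^\infty(0,T;L^2)$, $\varrho\in L^\infty(0,T;L^\gamma)$, and $\bfv\in L^2(0,T;W^{1,2})$ from the dissipation. These yield weak-$*$ limits for all quantities; the only genuinely delicate point is the identification of the nonlinear terms, above all the limit of the pressure $p(\varrho)=a\varrho^\gamma$, which is not weakly continuous. The hard part will be, exactly as in the classical theory, passing to the limit in the pressure and proving strong convergence of the density. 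This is handled by the \emph{effective viscous flux} identity together with Feireisl's \emph{oscillation-defect-measure} technique: one shows that the quantity $p(\varrho)-(\lambda+\tfrac13\mu)\Div\bfu$ (the effective flux) enjoys weak continuity along the approximating sequence, deduces that the renormalized continuity equation \eqref{eq:continuity.weak} is preserved in the limit, and thereby upgrades weak to strong $L^1$ convergence of $\varrho$, so that $\overline{p(\varrho)}=p(\varrho)$ in the limit. The extra Coriolis term $\varrho\,\omega\,\bfe_3\times\bfu$ is lower-order and causes no new difficulty: it is controlled by the same $\sqrt\varrho\,\bfu$ and $\omega$ bounds, and one checks its weak limit using strong convergence of $\omega$ (the angular velocity lives in a single ODE and converges along a subsequence after an Arzel\`a--Ascoli/Aubin--Lions argument applied to the total angular momentum $I\omega+\bfe_3\cdot\int_{\mathcal C}\varrho\,\bfx\times\bfu$).

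Because the whole scheme is essentially a transcription of the arguments already developed for the inertial (force-free) compressible problem in \cite{GaMaNe,GaMaNe2}, where the same coupled fluid--rigid-body structure with a fixed cavity was treated, the detailed estimates carry over with only cosmetic modifications. The genuinely new features here are the prescribed-magnitude gravity $\bfg$ satisfying $\dot{\bfg}+\omega\bfe_3\times\bfg=\0$, which is an additional harmless bounded ODE that closes together with the $\omega$-equation, and the forcing term $\varrho\,\bfg$ in the momentum balance, which is controlled by the $L^\infty(L^\gamma)$ bound on $\varrho$ since $\bfg$ is bounded. I would therefore organize the proof as a sequence of steps --- construction of the approximate system, uniform energy and higher-integrability bounds, and the three limit passages culminating in the effective-flux/strong-density-convergence argument --- and refer to \cite{GaMaNe} and the monograph treatment of compressible Navier--Stokes for the parts that are entirely standard, presenting in detail only the modifications required by the Coriolis coupling and the gravitational forcing. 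The main obstacle, to reiterate, is the compactness of the density and the attendant identification of the pressure limit; everything else follows from the energy inequality built into Definition \ref{def:3.1}.
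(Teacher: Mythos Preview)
Your proposal is correct and matches the paper's own treatment: the paper does not give a proof but simply remarks that existence follows by combining the Feireisl--Novotn\'y theory (the monograph \cite{FeNo} and \cite{FeNoPe}) with the coupled rigid-body arguments of \cite{GaMaNe2}, which is precisely the Lions--Feireisl scheme you outline. The only point the paper singles out that you leave implicit is the use of the Bogovski operator to obtain the higher integrability of the pressure needed for the limit passage; you allude to this under ``higher-integrability bounds,'' but it is worth naming explicitly since the paper flags it as the crucial step.
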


\begin{remark}
The proof of this theorem is omitted, since it can be obtained by simply combining the arguments used in \cite{FeNo} in the case when the motion of $\mathscr B$ is prescribed with  those of  \cite{GaMaNe2}, where the motion of $\mathscr B$ is a further unknown. 
The crucial point is to show  uniform estimates to  derive the regularity of the pressure by using the Bogovski operator that allows for the  passage to the limit  in the pressure term. A detailed treatment of this issue can be found in \cite{GaMaNe2} and \cite{FeNoPe}.
\end{remark}

\begin{remark}
The regularity on $\mathcal C$ stated in the theorem could  be relaxed  to assume $\calc$ to be just of class $C^{0,1}$ (or even less regular). The method may be found in \cite{kuku}.
\end{remark}

\subsection{Global estimates}\label{Gloe}
Hereinafter we assume that $\varrho$ and $\vu$ is a weak solution in the sense of Definition \ref{def:3.1}. Moreover, we assume $\mathcal C$ is of class $C^{0,1}$  and $\gamma > \frac 32$.
We recall that, from the energy inequality, we deduce the following estimates
\begin{equation*}
\begin{split}
\mbox{ess sup}_{t\in (0,\infty)}\|\varrho(t,\cdot)\|_{\gamma}&\leq c\\
\sup_{t\in (0,\infty)}|\omega(t)|&\leq c\\
\mbox{ess sup}_{t\in (0,\infty)}\|\varrho(t,\cdot) |\bfu|^2(t,\cdot)\|_1& \leq c\\
\|\vu\|_{L^2((0,\infty)\times \mathcal C)} & \leq c\\
\|\nabla\bfu\|_{L^2((0,\infty)\times \mathcal C)} & \leq c
\end{split}
\end{equation*}
for some $c>0$.
In order to perform the long-time analysis of our solutions we need some other uniform bounds that we are going to derive.   
First, we observe that from \eqref{eq:compressible.NS}$_5$ we get at once
\begin{equation}\label{eq:g.bound}
|\vg(t)| = |\vg(0)|\ \mbox{for all }t\in (0,\infty).
\end{equation}
We next define the sequence
$$
(\varrho_n(t),\bfv_n(t),\omega_n(t)):= (\varrho(n+t),\bfv(n+t),\omega(n+t))
$$
and investigate its behavior as $n\to\infty$.
Throughout, we shall use the letter $c$ to denote an arbitrary constant  independent of $n$.
We begin to show higher integrability properties of the density, by adapting a method from \cite[Section 7.9.5]{NoSt}. Consider the test function 
$$
\varphi(t,\bfx) = \psi(t)\Phi(t,\bfx),\ \Phi = {\mathfrak B} \left(S_\alpha(b_k(\varrho_n)) - \dashint_{\mathcal C} S_\alpha(b_k(\varrho_n))\, {\rm d}t\right)
$$
where $\psi\in C^\infty_c(-1,2)$, ${\mathfrak B}$ is the Bogovski operator,  $S_\alpha$ is a mollifying operator with respect to time and 
$$
b_k(\varrho) = \left\{ 
\begin{array}{l}
\varrho^\nu\ \mbox{for }\varrho\in [0,k)\\
k^\nu\ \mbox{for }\varrho \in [k,\infty)
\end{array}
\right.
$$
for some $\nu\in (0,\frac23\gamma-1]$. Such a $\varphi$ is an admissible test function for \eqref{eq:compressible.NS}. We thus obtain
\begin{multline*}
\int_{-1}^2 \psi\int_{\mathcal C} p(\varrho_n) S_\alpha(b_k(\varrho_n))\,  \dx {\rm d}t = \int_{-1}^2 \int_{\mathcal C} \psi p(\varrho_n)\left(\dashint_{\mathcal C} S_\alpha(b_k(\varrho_n)\right)\,  \dx {\rm d}t\\
+ \int_{-1}^2 \int_{\mathcal C} \psi  \mathbb S(\bfv_n):\nabla \Phi\,  \dx {\rm d}t + \int_{-1}^s\int_{\mathcal C} \psi    \varrho_n \vg \cdot \Phi \,  \dx {\rm d}t - \int_{-1}^2\int_{\mathcal C} \psi  \varrho_n \bfv_n \otimes \bfu_n : \nabla \Phi\,  \dx {\rm d}t\\
-\int_{-1}^2\int_{\mathcal C} \varrho_n \bfu_n \cdot \Phi \pat \psi \,  \dx {\rm d}t - \int_{-1}^2\int_{\mathcal C} \varrho_n \bfu_n \cdot \pat \Phi \psi\,  \dx {\rm d}t + \int_{-1}^2\int_{\mathcal C}\psi \varrho_n \omega_n \ve_3 \times \bfu_n \cdot \Phi \,  \dx {\rm d}t
\end{multline*}
Every term above, except for  the last one, may be estimated similarly as it is done in \cite[Section 7.9.5.2]{NoSt}. The last term may be estimated as follows (compare with the estimate of term $J_5$ in \cite[Section 7.9.5.2]{NoSt})
\begin{multline*}
\left|\int_{-1}^2\int_{\mathcal C}\varrho_n \omega_n \ve_3 \times \bfu_n \Phi \,  \dx {\rm d}t\right|\leq c\int_{-1}^2\int_{\mathcal C}|\psi|\varrho_n |\omega_n|^2 |\Phi|\,  \dx {\rm d}t + c\int_{-1}^2\int_{\mathcal C}|\psi| \varrho_n |\omega_n||\bfv_n| |\Phi| \,  \dx {\rm d}t\\
\leq c \|\psi\|_{L^1} \|S_\alpha(b_k(\varrho_n))\|_{L^\infty(L^{\frac{6\gamma}{5\gamma-3}})}.
\end{multline*}
Thus, one may let  $\alpha\to 0$ and $k\to\infty$ and, in the same fashion as \cite{NoSt}, to deduce
$$
\int_0^1\int_{\mathcal C}\varrho_n^{\gamma + \nu}\,  \dx {\rm d}t \leq c.
$$
Furthermore, from the energy and Korn's inequalities we easily derive
\begin{equation*}
\int_\tau^{\tau+1} \int_\calc |\nabla \bfv_n|^2\,  \dx {\rm d}t\to 0
\end{equation*}
as $\tau \to \infty$. 
As a result, along a subsequence,
\begin{equation}\label{eq:convergence}
\begin{split}
\varrho_n &\to \varrho_s \ \mbox{weakly in }L^{\gamma + \nu}((0,1)\times \calc)\\
\bfv_n& \to \bfv_s\equiv 0\ \mbox{weakly in }L^2(0,1; W^{1,2}(\calc))\\
\omega_n & \to \omega_s\ \mbox{weakly}^*\ \mbox{in }L^\infty(0,1)\\
p(\varrho_n) & \to p(\varrho)_s\ \mbox{weakly in } L^{1+ \gamma/\nu}((0,1)\times \calc).
\end{split}
\end{equation}
The functions $\varrho_s,\ \bfv_s,\ \omega_s$ and $p(\varrho)_s$ solve \eqref{eq:compressible.steady} and thus $\omega_s = 0$.
Notice that $p(\varrho)_s$ denotes a weak limit of $p(\varrho_n)$ and since $p$ is nonlinear, it is not necessarily true that $p(\varrho)_s = p(\varrho_s)$. We shall address this issue in the next subsection. 
\subsection{Limit of the pressure term}\label{lipre}

We will prove that $p(\varrho)_s = p(\varrho_s)$. To this end, it is sufficient to adapt the method from \cite[Section 4]{FePe}. Let
$$
G(z) = z^\alpha,\ 0<\alpha<\min\left\{\frac 1{2\gamma}, \frac{\nu}{2(\nu+\gamma)}\right\}
$$
and  consider a function $b(z) = G(p(z))$ in \eqref{eq:continuity.weak} to deduce
\begin{multline*}
|\langle \pat G(p(\varrho_n)),\varphi\rangle| =\\ \left| \int_0^1\int_{\mathcal C} G(p(\varrho_n))\bfv_n \nabla \varphi\,  \dx {\rm d}t + \int_0^1\int_{\mathcal C} (G(p(\varrho_n)) - G'(p(\varrho_n)) \varrho_n)\varphi \bfv\,  \dx {\rm d}t\right| 
\leq  c \|\varphi\|_{1,q_1},
\end{multline*}
for some $q_1>1$ and for $\varphi \in C^\infty_c((0,1)\times\mathcal C)$.
Consequently
$$
\mbox{Div}_{t,x} (G(p(\varrho_n),0,0,0)\ \mbox{is precompact in }W^{-1,q_1}_{\loc}((0,1)\times \calc).
$$
We know that 
$$
|\langle \nabla p(\varrho_n),\varphi\rangle|  = \left|-\int_0^1\int_{\mathcal C} p(\varrho_n)\diver \varphi\,  \dx {\rm d}t\right| \leq c \|\varphi\|_{1,q_2}.
$$
for some $q_2>1$ and for $\varphi \in C^\infty_c((0,1)\times\mathcal C)$. Thus
$$
\mbox{Curl}_{t,x} (p(\varrho_n),0,0,0)\ \mbox{is precompact in }W^{-1,q_2}_{\loc}((0,1)\times \calc)
$$
The well known div-curl lemma (see \cite{tartar}) yields 
\begin{equation}\label{eq:div.curl}
G(p(\varrho_n))p(\varrho_n)\to G(p(\varrho)_s)p(\varrho)_s.
\end{equation}
According to \cite[Theorem 6.2]{pedregal} there exists a parametrized family of probabilistic measures $\nu_{t,x}$ on $[0,\infty)$ such that
$$
\varrho_s(t,\bfx) = \int_0^\infty \rho \, {\rm d}\nu_{t,x}(\rho).
$$
and, according to \eqref{eq:div.curl}, we also have
\begin{equation}\label{eq:young}
\int_0^\infty \rho^{\alpha\gamma + \gamma}\, {\rm d}\nu_{t,x}(\rho) = \int_0^\infty \rho^{\alpha\gamma}\, {\rm d}\nu_{t,x}(\rho) \int_0^\infty \rho^\gamma \, {\rm d}\nu_{t,x}(\rho).
\end{equation}
where we assume for simplicity that $p(\varrho) = \varrho^\gamma$. 
Fix $(t,\bfx)$ and set $\theta^{\alpha\gamma} := \int_0^\infty \rho^{\alpha\gamma}\ {\rm d}\nu_{t,x}(\rho)$. Then \eqref{eq:young} yields
$$
\int_0^\infty \left( \rho^{\gamma\alpha + \gamma}- \theta^{\gamma\alpha} \rho^\gamma - \theta^\gamma(\theta^{\alpha\gamma} - \rho^{\alpha\gamma})\right)\, {\rm d}\nu_{t,x}(\rho) = 0,
$$
which transforms into
$$
\int_0^\infty (\rho^\gamma - \theta^\gamma)(\rho^{\alpha\gamma} - \theta^{\alpha\gamma})\, {\rm d}\nu_{t,x}(\rho) = 0.
$$
The integrand is strictly positive for all $ \rho \neq \theta$ and since $\nu_{t,x}$ is a probabilistic measure, we get $\nu_{t,x} = \delta_{\varrho_s(t,\bfx)}$ where $\delta_\alpha$ is a Dirac mass at point $\alpha$. Consequently, 
$$
\varrho_n\to \varrho_s\ \mbox{strongly in }L^{q}((0,1)\times\calc),
$$
for all $q\in [1,\gamma + \nu)$ yielding $p(\varrho)_s = p(\varrho_s)$.

\subsection{Large-time behavior}
In view of what we have proved in the previous subsections, we may now proceed to the limit in \eqref{eq:compressible.NS} and deduce that $\varrho_s,\ \omega_s,\ \vg_s$ solve \eqref{eq:compressible.steady}. Furthermore, \eqref{eq:convergence} allows us to pass to a limit also in the energy as follows
\begin{equation*}
\lim_{t_n\to\infty} \int_{t_n}^{t_n+1}\mathcal E(\varrho(t),\vu(t),\omega(t),\vg(t))\ {\rm d}t  = \mathcal E(\varrho_s,0,0,\vg_s)\,.
\end{equation*}
We will assume that  there is only one solution to \eqref{eq:steady.2}    
 fulfilling the condition
$$
\mathcal E(\varrho_s,0,0,\vg_s) \leq \mathcal E(\varrho_0,\vu_0,\omega_0,\vg_0)\, .
$$
In such a case, as there is only one possible limit, we immediately get $\vg(t)\to \vg_s$ as $t\to \infty$. Due to \eqref{eq:compressible.NS}$_2$ we have $\partial_t\varrho\in L^2(W^{-1,2})$ and, consequently
$
\varrho(t) \to \varrho_s
$
as $t\to \infty$.

We have just proved the following theorem.
\begin{theorem}\label{asbe}
Let $\calc$ be a Lipschitz domain and let the initial conditions $\varrho_0,\bfv_0,\omega_0$ and $\vg_0$ be the same as in Theorem \ref{th:4.1}. Assume that there is just one solution to \eqref{eq:compressible.steady} for which $\mathcal E(\varrho_s,\0,0, \vg_s) \leq \mathcal E(\varrho_0,\vu_0,\omega_0,\vg_0)$. Then every renormalized weak solution to \eqref{eq:compressible.NS} tends to $(\varrho_s,\0,0,\vg_s)$. More precisely,
\begin{equation*}
\begin{split}
\varrho(t)&\to \varrho_s \ \mbox{weakly in } L^\gamma\ \mbox{as }t\to \infty,\\
\bfv(t_n + t)&\to 0\ \mbox{strongly in  } L^2(0,1; W^{1,2}(\calc))\ \mbox{as}\ t_n\to \infty,\\
\omega(t) & \to 0 \mbox{ as }t\to \infty,\\
\vg(t) &\to \vg_s \ \mbox{ as } t\to \infty.
\end{split}
\end{equation*}
\end{theorem}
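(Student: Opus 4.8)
The plan is to identify $(\varrho_s,\0,0,\vg_s)$ as the unique point of the $\omega$-limit set of the trajectory and then to upgrade the subsequential convergence of the time shifts, already obtained in Subsections~\ref{Gloe}--\ref{lipre}, to convergence of the whole trajectory. First I would record the consequences of the energy inequality of Definition~\ref{def:3.1}. Since $\mathbb S(\bfv):\nabla\bfv\ge 0$, it yields the uniform-in-time bounds collected at the start of Subsection~\ref{Gloe}, the a.e.\ inequality $\mathcal E(\varrho(\tau),\vu(\tau),\omega(\tau),\vg(\tau))\le\mathcal E(\varrho_0,\vu_0,\omega_0,\vg_0)$, and the finiteness of the total dissipation over $(0,\infty)$. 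The latter forces $\int_\tau^{\tau+1}\int_\calc|\nabla\bfv|^2\,\dx\,\dt\to 0$ as $\tau\to\infty$, so by Korn's and Poincar\'e's inequalities (recall $\bfv=0$ on $\partial\calc$) the shift $\bfv(t_n+\cdot)\to 0$ \emph{strongly} in $L^2(0,1;W^{1,2}(\calc))$ for \emph{every} $t_n\to\infty$; this already gives the second convergence claimed. Together with $|\vg(t)|=|\vg_0|$ from \eqref{eq:g.bound}, the uniform bounds let me extract, from an arbitrary $t_n\to\infty$, a subsequence along which the shifts converge in the senses of \eqref{eq:convergence}, with $\vg(t_n)\to\vg_s$ for some $\vg_s\in\mathbb R^2\times\{0\}$, $|\vg_s|=|\vg_0|$.

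The main obstacle is the passage to the limit in the nonlinear pressure, i.e.\ showing $p(\varrho)_s=p(\varrho_s)$, since weak convergence of the shifted densities is not preserved by $p$. I would resolve it exactly as in Subsection~\ref{lipre}: the higher integrability $\int_0^1\int_\calc\varrho_n^{\gamma+\nu}<\infty$ feeds the div--curl lemma applied to $(p(\varrho_n),0,0,0)$ and $(G(p(\varrho_n)),0,0,0)$ to produce \eqref{eq:div.curl}, after which the Young-measure representation forces $\nu_{t,x}$ to be a Dirac mass and hence $\varrho_n\to\varrho_s$ strongly in $L^q$, $q<\gamma+\nu$, so that $p(\varrho)_s=p(\varrho_s)$. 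With strong density compactness secured, I can pass to the limit in every term of \eqref{eq:compressible.NS} and conclude that $(\varrho_s,\0,0,\vg_s)$ solves the steady system \eqref{eq:compressible.steady}; the gravity constraint $\omega\ve_3\times\vg=0$ with $\vg\perp\ve_3$ then gives $\omega_s=0$, exactly as in the argument preceding Theorem~\ref{th:char}.

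Next I would invoke the uniqueness hypothesis. Strong convergence of the densities (controlling the $P(\varrho)$ terms in \eqref{EnEr}), convergence of $\vg(t_n+\cdot)$, and the vanishing of the kinetic terms give
$$\lim_{t_n\to\infty}\int_{t_n}^{t_n+1}\mathcal E(\varrho(t),\vu(t),\omega(t),\vg(t))\,\dt=\mathcal E(\varrho_s,\0,0,\vg_s).$$
Combined with the a.e.\ bound $\mathcal E(\varrho(\tau),\vu(\tau),\omega(\tau),\vg(\tau))\le\mathcal E(\varrho_0,\vu_0,\omega_0,\vg_0)$ from the first paragraph, this shows that the limiting steady state satisfies $\mathcal E(\varrho_s,\0,0,\vg_s)\le\mathcal E(\varrho_0,\vu_0,\omega_0,\vg_0)$. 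By hypothesis there is exactly one steady solution of \eqref{eq:compressible.steady} meeting this energy bound, so the limit is independent of the chosen sequence and of the subsequence. A routine subsequence argument then makes the convergence hold along the full trajectory, giving $\vg(t)\to\vg_s$ and the time-averaged convergence of $\varrho$ to $\varrho_s$.

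Finally, to reach the pointwise-in-time statement $\varrho(t)\to\varrho_s$ weakly in $L^\gamma$, I would use that the continuity equation \eqref{eq:compressible.NS}$_2$ together with the bounds on $\varrho$ and $\varrho\bfv$ yields $\partial_t\varrho\in L^2(0,\infty;W^{-1,2}(\calc))$, so that $t\mapsto\langle\varrho(t),\phi\rangle$ is uniformly continuous for each fixed $\phi$. This equicontinuity prevents oscillation and converts the time-averaged convergence into $\langle\varrho(t),\phi\rangle\to\langle\varrho_s,\phi\rangle$ for every test function $\phi$, which is the asserted weak $L^\gamma$ convergence. The single hardest ingredient remains the density compactness of the second paragraph (the effective-viscous-flux/div--curl step); the remainder is assembly of the estimates of Subsections~\ref{Gloe}--\ref{lipre} with the uniqueness-driven collapse of the $\omega$-limit set.
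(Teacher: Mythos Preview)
Your proposal is correct and follows essentially the same route as the paper, whose argument for this theorem is distributed over Subsections~\ref{Gloe}--\ref{lipre} and the paragraph preceding the statement: energy bounds and time-shifted compactness, the div--curl/Young-measure step for $p(\varrho)_s=p(\varrho_s)$, passage to the limit in \eqref{eq:compressible.NS}, the uniqueness hypothesis to collapse the $\omega$-limit set, and the $\partial_t\varrho\in L^2(W^{-1,2})$ regularity to upgrade to $\varrho(t)\to\varrho_s$. You spell out a couple of points the paper leaves implicit (Korn/Poincar\'e for the \emph{strong} convergence of $\bfv_n$, and the equicontinuity-in-time argument for $\varrho$), but the structure and key ingredients are identical.
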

\par
As a simple application of this theorem, consider the case when the cavity $\calc$ is a sphere $S$ with its center $O'$ belonging to the straight line $\overline{OG}$, $G\neq O$. Then, from Section \ref{RelCon}(3), we know that there are two and only two equilibrium configurations, namely, with the pendulum either in the straight-down or straight-up position. More precisely, these configurations are characterized by two numbers $\sigma^+>0$ and $\sigma^-<0$, such that 
\be\label{OC}\overrightarrow{OC^\pm}=\sigma^\pm\,\bfe_1,\ee 
corresponding to the case when the center of mass $C$ of the coupled system $\mathscr S$ is below $(C^+)$ or above $(C^-)$ the hinge. Let us denote by $(\varrho_s^+,\bfg_s^+)$ and $(\varrho_s^-,\bfg_s^-)$ the two associated steady-state solutions, and set $\cale^\pm:=\cale(\varrho_s^\pm,\0,0, \vg_s^\pm)$.  Thus, $\varrho_s^+=\varrho_s^-\equiv r_s$, with $r_s$ given in \eqref{91}, and
$$
\int_{S^+}P(\varrho_s^+) - P'(\overline \varrho) (\varrho_s^+ - \overline\varrho) - P(\overline\varrho)\dx=\int_{S^-}P(\varrho_s^-) - P'(\overline \varrho) (\varrho_s^- - \overline\varrho) - P(\overline\varrho)\dx\,, 
$$
where $S^+$ [resp. $S_-$] denotes the position of the sphere in the straight-down [resp. straight-up] configuration of $\mathscr S$.
Moreover,
$$
\int_{\mathscr B}\hat{\varrho}\,\bfx\cdot\bfg=g\calm\,\overrightarrow{OC}\cdot\bfe_1\,.
$$
Collecting all the above, using \eqref{OC} and recalling \eqref{EnEr}, we show
\be
\mathcal E^+-\mathcal E^-=-g\,\calm\,(\sigma_++\sigma_-)<0\,.
\label{inda}\ee
Therefore, if we choose the initial data in such a way that
\be\label{inda1}
\cale^+<\mathcal E(\varrho_0,\vu_0,\omega_0,\vg_0)<\cale^-\,,
\ee
then {\em every} (renormalized) weak solution will converge for large times to $(\varrho_s^+,\bfg^+)$, namely, the pendulum will eventually reach the configuration with its center of mass in its lowest position. This will certainly happen, if we start the pendulum from rest ($\bfu_0\equiv{\bf0},\omega_0=0)$ and pick $(\varrho_0,\bfg_0)\neq (\varrho^\pm,\bfg^\pm)$, that is, the pendulum is initially away from either  straight-down and straight-up configurations.\footnote{Actually, if $\mathscr S$ is initially in one of these two positions with $\bfu_0$ and $\omega_0$ both vanishing, it will stay there for all times.}  In fact, from Lemma \ref{lemm3.6} we know that any minimizer of $\cale(\varrho,{\bf 0},0,\bfg)$ is a solution to \eqref{eq:compressible.second}--\eqref{eq:steady.2} and, by Theorem \ref{th:exi} that the set of minimizers is not empty. However, from the results of Section \ref{RelCon}(3) and \eqref{inda}, we deduce that $(\varrho^+,\bfg^+)$ is the only minimizer, which proves our claim.  
\section{Numerical results}
As we mentioned in the introductory section, in \cite{GaMa} a problem analogous to the one treated here was investigated under the assumption that the fluid filling the cavity was incompressible. One interesting point to investigate is whether there is any quantitative difference between the two problems. For example, how the characteristic time taken to reach the terminal state (the rest) depends on the compressibility of the fluid. Unfortunately, an analytic study of such a question is, to date, beyond our grasp. However, we have performed numerical tests that may suggest the answer. Objective of this section is to present these findings.  
\par 
For simplicity, we assume that the flow is two-dimensional -- this is a reasonable assumption as the physical phenomenon may hint to neglect the third dimension. 
\par
We propose a mixed finite volume -- finite element method for the approximation of the system \eqref{eq:compressible.NS} that we are going to describe next. 
\subsection{The mixed finite volume -- finite element scheme}
To begin, let $\calah$ be a regular and quasi-uniform triangulation of the cavity $\calc$ and $\edges$ be the set of all interior faces of $\calah$. Further, we write $h=\max_{K \in \calah} h_K$ as the mesh size, where $h_K$ is the diameter of an element $K\in \calah$. We denote by $\Qh$ the space of piecewise constant functions and by $\Vh$ the piecewise linear Crouzeix--Raviart element space:
\[ \Qh = \left\{v\in L^1(\calc) |\;  \bfv_K \text{ is a constant} \; \forall\; K \in \calah \right\},\]
\[
\Vh = \left\{\bfv\in L^2(\calc) \;|  \; \bfv_K \text{ is a piecewise affine function}\; \forall \; K \in \calah; \; \int_{\sigma} \jump{\bfv}\dsx=0\; \forall\; \sigma \in \edges \right\},
\]
where $\jump{\cdot}|_{\sigma}$ represents the jump over the interface $\sigma$. 
To specify the homogeneous Dirichlet boundary condition, we define 
\[\Vhz = \left\{\bfphi \in \Vh \; |\; \int_\sigma \bfphi \dsx =0 \; \forall \;  \sigma \in \partial \calc\right\}.
\]
Now we are ready to introduce the following mixed finite volume -- finite element method. 
\paragraph{Numerical method (compressible solver).} 
Let $\TS$ be the time increment, $\vgh^0=\vg(0)$, $\vwh^0=\vw(0)$, and let $(\vrh^0, \vuh^0)$ be the projection of the initial data $(\vr,\vu)(0)$ onto the space $\Qh \times \Vh$. 
Then, 
for $k=1,\dots,N_t=T/\TS$  
we seek $(\vrh^k, \vuh^k, \vgh^k, \vwh^k) \in \Qh \times \Vh \times \mathbb R^2 \times \mathbb R$ as solutions to the following system of algebraic equations 
\begin{subequations}\label{scheme}
\begin{equation}\label{scheme_g}
    \Dt \vgh^k  +  \vwh^{k-1} \ve_3 \times \vgh^{k+1/2} =0,
\end{equation}
where $\vgh^{k+1/2} = \frac{\vgh^{k-1} +\vgh^k}{2}$ and 
$\Dt v_h^k = \frac{v_h^k - v_h^{k-1}}{\TS}$; 
\begin{equation}\label{scheme_r}
   \int_K \Dt \vrh^k  \dx 
   + \int_{\partial K}\vrhkup \bfvh^{k-1} \cdot \vn \dsx
   =0 \text{ for all } K\in\calah\,,
\end{equation}
where
$ \bfvh= \vuh -\vuB, \vuB = \vwh \ve_3 \times \bfx$,   
$\vn$ is the outer normal vector, and $\vrh^{up}$ is the so-called upwind value of the density given by 
\begin{equation*} \vrh^{up} = \begin{cases}\lim_{\delta\to0}\vrh(\bfx+\delta \vn)& \text{if } \bfvh\cdot\vn \geq 0,\\
\lim_{\delta\to0}\vrh(\bfx - \delta \vn) & \text{otherwise};
\end{cases}
 \end{equation*}
\begin{multline}\label{scheme_u}
\frac12 \intC{ \Big( 
   \Dt (\vrh^k\vuh^k) \cdot \bfphi
   +\vrh^{k-1} \Dt \vuh^k \cdot \bfphi
   +\vrh^k\bfvh^{k-1}\cdot \nabla\vuh^k \cdot \bfphi 
   -\vrh^k\bfvh^{k-1}\cdot \nabla \bfphi \cdot \vuh^k 
   \Big) } 
\\
+ \intC{\vrh^k w_h^{k-1} \ve_3\times\vuh^k \cdot \bfphi }
  + \intC{ \Big( \mathbb S(\vuh^k): \nabla \bfphi 
  - p(\vrh^k)\diver \bfphi \Big)} 
  = \intC{\vrh^k \vgh^{k+1/2} \cdot \bfphi},
\\  \mbox{ for all }\; \bfphi \in \Vhz ;
\end{multline}
\begin{equation*}
  I_{B33} \Dt \vwh^k 
  + \Dt \left(\intC{\vrh^k \bfx \times \vuh^k} \right) \cdot \ve_3 
 =\intO{\vrh^k \bfx }\times \vgh^{k+1/2} \cdot \ve_3.
 \end{equation*}
\end{subequations}

\begin{remark}
The scheme \eqref{scheme} enjoys the following properties for all $k=1,\dots, N_t$: 
\begin{itemize}
\item Conservation of mass. Indeed, summing up over all elements leads to the  mass conservation. 
\[ \intC{\vrh^k} = \intC{\vrh^{k-1}}=\cdots=\intC{\vrh^{0}}.    \]
\item Conservation of gravity in the sense of \eqref{eq:g.bound}, i.e.,
\[ |\vgh^k| = |\vgh^{k-1}|=\cdots=|\vgh^{0}|, \]
which can be easily obtained by multiplying \eqref{scheme_g} with $\vgh^{k+1/2}=\frac{\vgh^{k-1} +\vgh^k}{2}$.
\item Positivity preserving of density.  We have $\vrh^k>0$ provided $\vrh^0>0$, for which we refer the proof to \cite[Lemma 4.1]{Karlsen}. 
\end{itemize}
\end{remark}

\subsection{Numerical experiments}\setcounter{figure}{1}
We take the pendulum as a circular plate with a circular cavity in the center 
\[
\mathscr B =\left\{\bfx \;| R_0 \leq \sqrt{(x_1-L)^2 +x_2^2} \leq R_1 \right\}, \quad 
\calc=\left\{\bfx\;|\sqrt{(x_1-L)^2 +x_2^2} \leq R_0 \right\}, 
\]
with $R_0=0.1$, $R_1=0.2$, and $L$ be the length of the pendulum, see Figure~\ref{fig:pen}. In our numerical experiments we set $\gamma=5/3$, $\mu=100$, and $\eta = 0$ if not otherwise mentioned. Further, we denote $\varrho_{\mathscr B}$ as the density of the body $\mathscr B$, $\overline{\vr_0} = \frac{1}{|\calc|} \intC{\vr(0)}$ as the averaged initial density of the fluid in the cavity $\calc$ and $R_{\vr}=\varrho_{\mathscr B}/ \overline{\varrho_0}$ as the ratio of the densities. The initial data are set as $\varrho(0)=1, \vu(0)={\bf 0}, \vw(0)=0, \vg(0) =(\cos{\vartheta_0},\sin{\vartheta_0})$ with $\vartheta_0 = \pi/45$. 
\begin{figure}[!h]
\centering
\begin{tikzpicture}[scale=1.]
\draw[fill=blue!20,very thick] (0,0) circle (1.2);
\draw[fill=white,very thick] (0,0) circle (0.6);
\draw[thick,->,black] (1.2,4.8)--(-0.4,-1.6) node[left] {$x_1$}; 
\draw[thick,->,black] (0,4.25)--(2,3.75) node[below] {$x_2$}; 
\fill[black] (1,4) circle (2pt);
\fill[red] (0,0) circle (1pt);
\draw[very thick,->,black] (0,0)--(0,-1.5) node[right] {$\vg$};
\draw[thick,dashed,black] (1,4)--(1,2); 

\path node at (0.2,2) { \large$L$};
\draw[thick,red,->] (0,0) -- (0.52,0.3); 
\path node at (0.27,-0.1) { \color{red} $R_0$};
\draw[thick,blue,->] (0,0) -- (-1.2,0); 
\path node at (-0.8,-0.25) { \color{blue} $R_1$};
\draw[] (1,4) -- (0.7,2.8) arc(249:270:0.9);
\path node at (0.8,2.5) { \large $\vartheta$};
\end{tikzpicture}
\caption{Pendulum with a cavity.}
\label{fig:pen}
\end{figure}
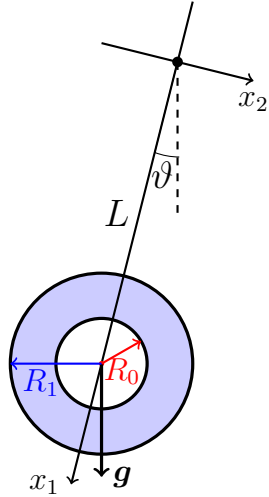

\subsubsection{Experiment 1: influence of gas parameters of the compressible solver~\eqref{scheme}}
We show in Figure~\ref{fig:test1} - Figure~\ref{fig:test3} the evolution of pendulum position (angle $\vartheta$) for different values of density ratio $R_{\vr}$, gas parameter $a$, and pendulum length $L$. First of all, in all these numerical experiments we observe the effect of the dissipation due to the viscosity of the fluid.
Moreover, we see \emph{larger} dissipation effects for: 
\begin{enumerate}
\item smaller density ratio $R_{\vr}$ (fixed gas parameter $a$ and pendulum length $L$) in Figure~\ref{fig:test1};
\item smaller gas parameter $a$ (fixed density ratio $R_{\vr}$ and pendulum length $L$) in Figure~\ref{fig:test2};
\item smaller pendulum length $L$ (fixed density ratio $R_{\vr}$ and gas parameter $a$) in Figure~\ref{fig:test3}.
\end{enumerate}





\begin{figure}[!h]
    \centering
\includegraphics[width=\textwidth]{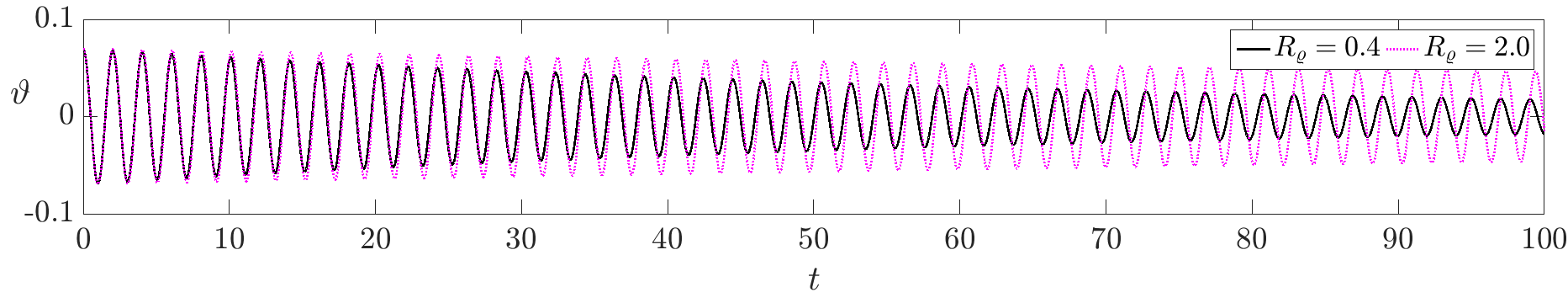}
    \caption{Evolution of pendulum position (angle $\vartheta$) for different density ratio $R_{\vr}$ with fixed gas parameter $a=10$ and pendulum length $L=0.4$.}
    \label{fig:test1}
\end{figure}

\begin{figure}[!h]
\centering
\includegraphics[width=\textwidth]{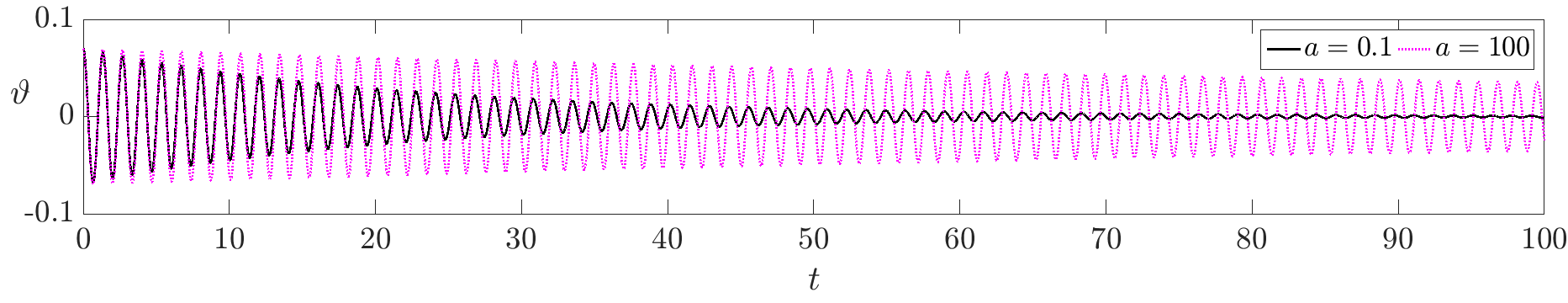}
    \caption{Evolution of pendulum position (angle $\vartheta$) for different gas parameter $a$ with fixed density ratio $r=1$ and length $L=0.4$.}
    \label{fig:test2}
\end{figure}
\begin{figure}[!h]
\centering
\includegraphics[width=\textwidth]{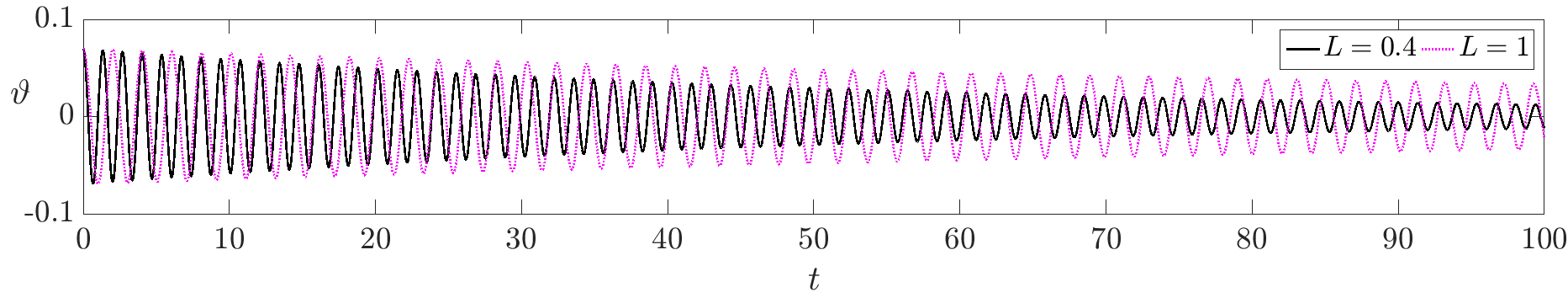}
    \caption{Evolution of pendulum position (angle $\vartheta$) for different pendulum length $L$ with fixed density ratio $r=1$ and gas parameter $a=10$.}
    \label{fig:test3}
\end{figure}

\subsubsection{Experiment 2: comparison with an incompressible solver}
To compare the damping effects of compressible and incompressible fluids, we also introduce an incompressible solver which replaces the Navier--Stokes part of the compressible solver \eqref{scheme}, that is \eqref{scheme_r}--\eqref{scheme_u}, by it incompressible counterpart, while keeping the method of $\vgh$ and $\vwh$ unchanged. 
\paragraph{Incompressible solver.} 
Let $\vuh^0,\vgh^0, \vwh^0$ be given in the same way as the compressible solver. 
Let $\vr_\calc$ be the density of the incompressible fluid in the cavity $\calc$. 
For $k=1,\ldots, N_t$ we seek $(p_h^k, \vuh^k, \vgh^k, \vwh^k) \in X_h \times \Vh \times \mathbb R^2 \times \mathbb R$ such that the following system of algebraic equations hold:
\begin{subequations}
\begin{equation*}
    \Dt \vgh^k  +  \vwh^{k-1} \ve_3 \times \vgh^{k+1/2} =0,
\end{equation*}
\begin{equation*}
\begin{aligned}
& \vr_\calc \intC{ \Big( 
   \Dt \vuh^k \cdot \bfphi
   +\frac12\bfvh^{k-1}\cdot \nabla\vuh^k \cdot \bfphi 
   -\frac12\bfvh^{k-1}\cdot \nabla \bfphi \cdot \vuh^k 
   \Big) } 
+ \vr_\calc \intC{ w_h^{k-1} \ve_3\times\vuh^k \cdot \bfphi }
\\& 
+ \intC{ \Big( \mathbb S(\vuh^k): \nabla \bfphi 
  - p_h^k \diver \bfphi-  q_h \diver \vuh^k  \Big)} 
  = \intC{\vr_\calc \vgh^{k+1/2} \cdot \bfphi},
 \quad \forall \; q_h \in  X_h,\;  \bfphi \in \Vhz ,
\end{aligned}
\end{equation*}

\begin{equation*}
  I_{B33} \Dt \vwh^k 
  + \Dt \left(\intC{\vr_\calc \bfx \times \vuh^k} \right) \cdot \ve_3 
 = \left( \intB{\varrho_{\mathscr B} \bfx }+\intC{\vr_\calc \bfx } \right)\times \vgh^{k+1/2} \cdot \ve_3,
 \end{equation*}
 where $X_h :=\{v\in L^2(\calc) |\;  v_K \text{ is a constant } \; \forall\; K \in \calah; \intC{v}=0 \}$.
\end{subequations}

We show in Figure~\ref{fig:testIn} the evolution of pendulum positions (represented by the angle $\vartheta$) obtained by the compressible solver and the incompressible solver. Here we have used \emph{same parameters} for both solvers: $L=0.4$, $\mu=100$, $\eta=0$, $\varrho_{\mathscr B}=1$, and initial fluid density $\vr_\calc=1.0$. Note that the only difference relies on the  gas parameter $a(=0.1,\; 20,\; 100)$ in the compressible solver, which is not needed in the incompressible solver. Here, let us point out that larger gas parameter $a$ means smaller Mach number. Obviously, Figure~\ref{fig:testIn} tells that compressible fluids brings more damping. 
\begin{figure}[!h]
\includegraphics[width=\textwidth]{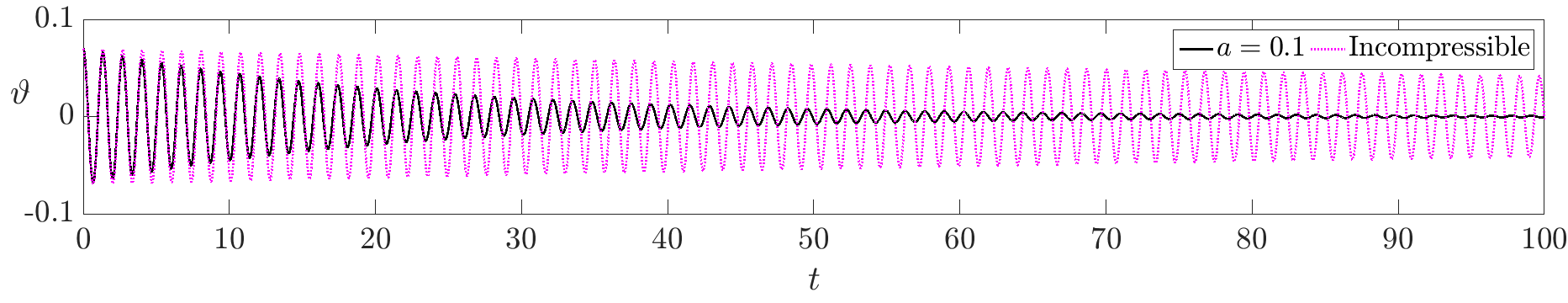}
\includegraphics[width=\textwidth]{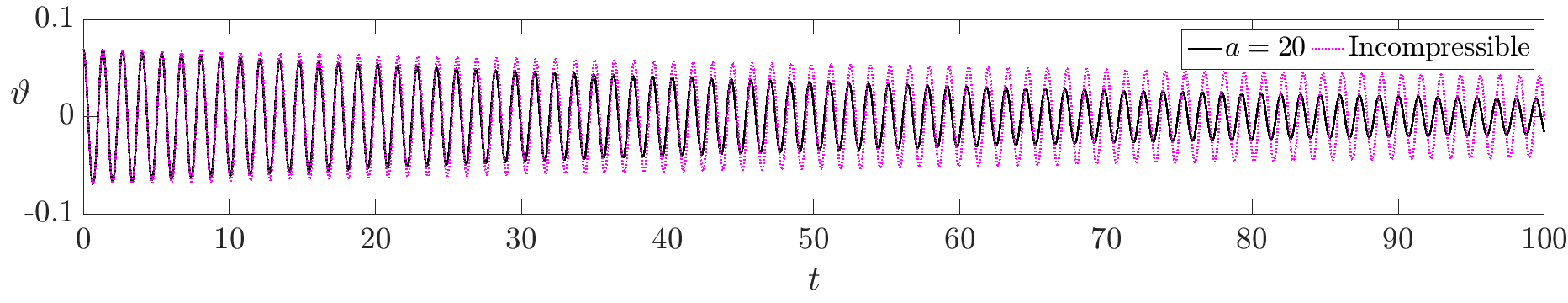}
\includegraphics[width=\textwidth]{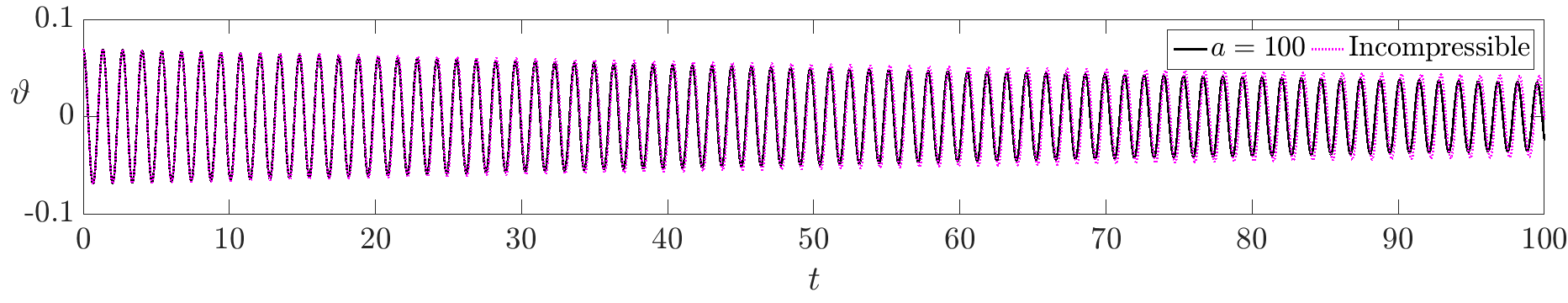}
\caption{Comparison of numerical solutions of compressible solver ($a=0.1$  $a=20$ and $a=100$) and incompressible solver.}
\label{fig:testIn}
\end{figure}

\section*{\centering Acknowledge}
The work of V\'aclav M\'acha and \v{S}\'arka Ne\v{c}asov\'a are supported by Praemium Academi\ae \ of \v S. Ne\v casov\'a and by grant GA\v CR GA22-01591S in the framework of RVO:67985840. 
The work of B.S. were supported by ERC-CZ grant LL2105 of the Ministry of Education, Youth and Sport of the Czech Republic and 
Charles University Research program No. UNCE/SCI/023

\section*{\centering Statements and Declarations}
{\bf Conflict of interests:}  On behalf of all authors, the corresponding author states that there is no conflict of interest.\\
{\bf Data  availability:} All data generated or analysed during this study are included in this article.

\newpage

\bibliographystyle{plain}

\begin{thebibliography}{}

\end{thebibliography}


\begin{thebibliography}{99}
\bibitem{AlSp}{\sc Bhuta, P.G.,  Koval, L.R.},   A viscous ring damper for a freely precessing
satellite. {\em Intern. J. Mech. Sci.} {\bf 8} (1966) 383--395
\bibitem{CaLeMo} {\sc Carl, S., Le, V., K., Montreanu, K.:} {\em Nonsmooth variational problems and their inequalities, Comparison principles and applications}. Springer, 2007 
\bibitem{Ce0} {\sc Chernousko, F.L.},{\em  Motion of a rigid body with cavities containing a viscous fluid}
(1968). NASA Technical Translations, Moscow, 1972
\bibitem{Ce}{\sc Chernousko, F.L., Akulenko, L.D., Leshchenko, D.D.}, {\em Evolution of Motions of a
Rigid Body About its Center of Mass}, Springer, Berlin 2017
\bibitem{DGMZ}{\sc Disser, K., Galdi, G.P., Mazzone, G., Zunino, P.}: Inertial Motions of a Rigid Body
with a cavity filled with a viscous liquid. {\em Arch. Ration. Mech. Anal}. {\bf 221}, 487--526
2016
\bibitem{EUS}{\sc Eswaran, M.,   Saha, U.K.}, Sloshing of liquids in partially filled tanks -- a review of experimental investigations, {\em Ocean Systems Engineering}, {\bf 1} (2011) 131--155
\bibitem{FeNo} {\sc Feireisl, E., Novotn\'y, A.:} {\em Singular limits in thermodynamics of viscous fluids}. Advances in Mathematical Fluid Mechanics.  Birkh\"auser, Basel, second edition, 2017

\bibitem{FeNoPe} {\sc Feireisl, E., Novotn\'y, A., Petzeltov\'a, H.:} {\em On the existence of globally defined weak solutions to the Navier-Stokes equations}. J. Math. Fluid Mech. 3 (2001), no. 4, 358--392

\bibitem{FePe} {\sc Feireisl, E., Petzeltov\'a, H.:} {\em Large-time behaviour of solutions to the Navier-Stokes equations of compressible flow}. Arch. Rational Mech. Anal 150 (1999), 77--96

\bibitem{FePe2} {\sc Feireisl, E., Petzeltov\'a, H.:} {\em On the zero-velocity-limit solutions to the Navier-Stokes equations of compressible flow}. Manuscripta Math. 97 (1998), no. 1, 109--116

\bibitem{FoLe} {\sc Fonseca, I., Leoni, G.:} {\em Modern methods in the calculus of variations: {$L^p$} spaces}. Springer, New York, 2007
\bibitem{GaIM}{\sc Galdi, G.P.}: Stability of permanent rotations and long-time behavior of inertial motions
of a rigid body with an interior liquid-filled cavity. Particles in flows, 217--253, {\em Adv.
Math. Fluid Mech.}, Birkhäuser/Springer, Cham, 2017
\bibitem{GaMaNe} {\sc Galdi, G., P., M\'acha, V., Ne\v casov\'a, \v S.:} {\em On the motion of a body with a cavity filled with compressible fluid}. Arch. Ration. Mech. Anal. 232 (2019), no. 3, 1649--1683
   
\bibitem{GaMaNe2} {\sc Galdi, G., P., M\'acha, V., Ne\v casov\'a, \v S.:} {\em On weak solutions to the problem of a rigid body with a cavity filled with a compressible fluid, and their asymptotic behavior}. International Journal of Non-Linear Mechanics 121, 2020, 103431
	
\bibitem{GaMa} {\sc Galdi, G., P., Mazzone, G.:} {On the motion of a pendulum with a cavity entirely filled with a viscous liquid} in {\em Recent progress in the theory of the Euler and Navier-Stokes equations}, 37--56, London Math. Soc. Lecture Note Ser., 430, Cambridge Univ. Press, Cambridge, 2016
\bibitem{GaMa1}{\sc  Galdi, G.P.; Mazzone, G.}: Nonlinear stability analysis of a spinning top with an interior liquid-filled cavity. {\em Math. Model. Nat. Phenom.} {\bf 16} (2021), Paper No. 22, 21 pp.	
\bibitem{GMZ}{\sc Galdi, G.P., Mazzone, G., Zunino P.}: Inertial motions of a rigid body with a cavity
filled with a viscous liquid, {\em Comptes Rendus M\'ecanique} {\bf 341}, 760--765 2013
\bibitem{GMM}{\sc Galdi, G.P., Mazzone, G., Mohebbi, M.}: On the motion of a liquid-filled rigid body
subject to a time-periodic torque, Recent developments of mathematical fluid mechanics,
{\em Adv. Math. Fluid Mech.}, Birkhäuser/Springer, Basel, pp. 233--255 2016
\bibitem{GMM1}{\sc Galdi, G.P., Mazzone, G., Mohebbi, M.}: On the motion of a liquid-filled heavy body
around a fixed point. {\em Q. Appl. Math. } {\bf 76}, 113--145 2018
\bibitem{NoSt} {\sc Novotn\'y, A., Stra\v skraba, I.:}{\em Introduction to the Mathematical Theory of Compressible Flow}, Oxford Lecture Series in Mathematics and its Applications 27, Oxford University Press, Oxford, 2004


\bibitem{Karlsen}
{\sc Karlsen K.~H., Karper T.~K.:} {\em Convergence of a mixed method for a semi-stationary compressible Stokes system}, Math. Comp. 80 (2011), 1459--1498 


\bibitem{KNN} {\sc Kra\v{c}mar, S., Ne\v{c}asov\'a, \v{S}., Novotn\'y, A.}, The motion of a compressible viscous fluid
around rotating body. {\em Ann. Univ. Ferrara} Sez. VII Sci. Mat. {\bf 60}, (2014) 189--208

\bibitem{kuku} {\sc Kuku\v cka, P.:} {\em On the existence of finite energy weak solutions to the Navier-Stokes equations in irregular domains},  Math. Methods Appl. Sci. 32 (2009)
\bibitem{Ma1}{\sc Mazzone, G.; Pr\"uss, J.; Simonett, G.}, A maximal regularity approach to the study of motion of a rigid body with a fluid-filled cavity. {\em J. Math. Fluid Mech.} {\bf 21} (2019),  Paper No. 44, 20 pp
\bibitem{Ma2} {\sc Mazzone, G.; Pr\"uss, J., Simonett, G.}, On the motion of a fluid-filled rigid body with Navier boundary conditions. SIAM {\em J. Math. Anal.} {\bf 51} (2019) 1582--1606

\bibitem{Ma3} {\sc Mazzone, G.}: On the free rotations of rigid bodies with a liquid-filled gap. {\em J. Math. Anal. Appl.} {\bf 496} (2021), no. 2, Paper No. 124826, 37 pp.
\bibitem{pedregal} {\sc Pedregal, P.:} {\em Parametrized measures and Variational principles}, Birkh\"auser Verlag,  Basel, 1997

\bibitem{tartar} {\sc Tartar, L.:} {\em Compensated compactness and applications to partial differential equations} in Nonlinear analysis and mechanics: Heriot--Watt Symposium, Vol. IV, pp. 136--212, Res. Notes in Math., 39, Pitman, Boston, Mass.--London, 1979. 
\bibitem{Tho}{\sc Thomson, W. (Lord Kelvin)}: On an experimental illustration of minimum energy. Nature. {\bf 23}, 69--70 (1880)


\end{thebibliography}

\end{document}